\def\frak{\mathfrak}
\newtheorem{thm}{Theorem}[section]
\newtheorem{cor}[thm]{Corollary}
\newtheorem{lem}[thm]{Lemma}
\theoremstyle{definition}
\theoremstyle{remark}
\newtheorem{rem}[thm]{\bf{Remark}}
\numberwithin{equation}{section}
\newcommand{\beas}{\begin{eqnarray*}}
\newcommand{\eeas}{\end{eqnarray*}}
\newcommand{\bes} {\begin{equation*}}
\newcommand{\ees} {\end{equation*}}
\newcommand{\be} {\begin{equation}}
\newcommand{\ee} {\end{equation}}
\newcommand{\bea} {\begin{eqnarray}}
\newcommand{\eea} {\end{eqnarray}}
\newcommand{\ra} {\rightarrow}
\newcommand{\R}{\mathbb R}
\newcommand{\C}{\mathbb C}
\newcommand{\N}{\mathbb N}
\newcommand{\la}{\lambda}
\begin{document}

\title[Chernoff's theorem on symmetric spaces] {A theorem of Chernoff on quasi-analytic functions for Riemannian symmetric spaces}

\author{Mithun Bhowmik, Sanjoy Pusti and Swagato K. Ray}

\address{(Mithun Bhowmik) Department of Mathematics, IISc Bangalore-560012, India}
\email{mithunb@iisc.ac.in}

\address{(Sanjoy Pusti) Department of Mathematics, IIT Bombay, Powai, Mumbai-400076, India}
\email{sanjoy@math.iitb.ac.in}

\address{(Swagato K Ray) Stat-Math Unit, Indian Statistical Institute, Kolkata-700108, India}
\email{swagato@isical.ac.in}

\thanks{The first author is supported by the Department of Science and Technology, India (INSPIRE Faculty Award).}


\begin{abstract}
An $L^2$ version of the classical Denjoy-Carleman theorem regarding quasi-analytic functions was proved by P. Chernoff on $\R^n$ using iterates of the Laplacian. We give a simple proof of this theorem which  generalizes the result on $\R^n$ for any $p\in [1, 2]$. We then extend this result to Riemannian symmetric spaces of compact and noncompact type for $K$-biinvariant functions.
\end{abstract}

\subjclass[2010]{Primary 43A85; Secondary 22E30, 33C67}

\keywords{Riemannian symmetric space, Quasi-analyticity,  Chernoff's theorem}

\maketitle
\section{Introduction}
A quasi-analytic class of functions is a generalization of the class of real analytic functions with respect to the following well-known property: if f is an analytic function on $(a,b)\subset\R$ then vanishing of $f$ along with all its derivatives at some point $x_0\in (a,b)$ implies that $f$ vanishes identically on $(a,b)$. Quasi-analytic classes are larger classes of functions for which this property still holds.
The class of quasi-analytic functions  are well-known on $\R$ and their characterization was given by the celebrated Denjoy-Carleman theorem \cite[Theorem 19.11]{Ru}. There are several generalizations and extensions of this result to several variables and certain Riemannian manifolds \cite{B, BT}. However, all these results deal with the $L^{\infty}$-norms of the derivatives involved. It is also well-known that quasi-analyticity of suitable functions can also be characterized in terms of the Fourier transform or Fourier coefficients of the functions. The earliest results in this direction go back to the works of Ch. de la Vall\'ee Poussin and A. E. Ingham \cite{Po, In}. Some recent research regarding generalizations of these latter results (see \cite{BPR, BGST, GT1, GT2}) brings out the importance of an interesting variant of the Denjoy Carleman theorem proved by Chernoff in \cite{Ch}. While discussing the notion of quasi-analytic vectors on Hilbert spaces, Chernoff proved the following variant of the Denjoy-Carleman theorem for functions on $\R^n$ using the $L^2$-norm of the iterates of the Laplacian $\Delta_{\R^n}$, instead of the $L^{\infty}$-norm.
\begin{thm}[\cite{Ch}, Theorem 6.1]\label{thm-intro}
Let $f:\R^n\to\C$ be a smooth function such that
for all $m\in \N\cup \{0\}$, $\Delta_{\R^n}^m f\in L^2(\R^n)$, and
\be \label{Carl-cond-Rn}
\sum_{m\in \N}\|\Delta_{\R^n}^m f\|_2^{-\frac{1}{2m}}=\infty.
\ee
If there exists $x_0\in \R^n$, such that $f$ and all its partial derivatives $\partial^{\alpha}f=\frac{\partial^{\alpha_1+\ldots\alpha_n}f}{\partial x_1^{\alpha_1}\ldots\partial x_n^{\alpha_n}}$, $\alpha\in(\N\cup \{0\})^n$, vanish at $x_0$ then $f$ vanishes identically.
\end{thm}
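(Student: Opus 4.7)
The plan is to transfer the problem to the Fourier side and reduce it, via a spherical harmonic expansion of $\what f$, to the classical one-dimensional Denjoy--Carleman theorem. After translating so that $x_0 = 0$, the hypothesis $\Delta^m f \in L^2(\R^n)$ for every $m$ gives $|\xi|^{2m}\what f \in L^2(\R^n)$ with norm $T_m := \|\Delta^m f\|_2$, and Cauchy--Schwarz applied to these bounds (combined with $\hat f \in L^2$) shows $(1+|\xi|)^k \what f \in L^1(\R^n)$ for every $k \ge 0$. Hence Fourier inversion holds pointwise and the vanishing $\partial^\alpha f(0)=0$ for every multi-index $\alpha$ is equivalent to the moment identities
\be \label{momcond}
\int_{\R^n} P(\xi)\, \what f(\xi)\, d\xi = 0, \qquad \txt{for every polynomial } P.
\ee

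Expand $\what f(r\omega) = \sum_{l,j} a_{l,j}(r)\, Y_{l,j}(\omega)$ in spherical harmonics. Since every polynomial on $\R^n$ is a finite linear combination of products $Y_{l,j}(\omega)\, r^{l+2m}$, the conditions \eqref{momcond} become
\bes
\int_0^\infty r^{l+2m+n-1}\, a_{l,j}(r)\, dr = 0, \qquad \txt{for all } l,j \txt{ and all } m \in \N \cup \{0\}.
\ees
Set $G_{l,j}(r) = r^{l+n-1}\, a_{l,j}(r)$ and extend evenly to $\R$. Then every moment $\int_\R r^k\, G_{l,j}(r)\, dr$ vanishes (odd moments by symmetry, even moments by the above), and Plancherel in polar coordinates yields the weighted $L^2$-estimates
\bes
\int_0^\infty r^{4m-2l-n+1}\, |G_{l,j}(r)|^2\, dr \le T_m^2.
\ees

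Now let $H_{l,j}$ denote the one-dimensional Fourier transform of $G_{l,j}$. The vanishing moments give $H_{l,j}^{(k)}(0) = 0$ for every $k$, while splitting $\{|r|\le 1\} \cup \{|r|>1\}$ and applying Cauchy--Schwarz with the weight $|r|^{-1}$ on the outer region gives
\bes
\|H_{l,j}^{(k)}\|_\infty \le \int_\R |r|^k\, |G_{l,j}(r)|\, dr \le C_l \Big(\int_\R r^{2(k+1)}\, |G_{l,j}(r)|^2\, dr\Big)^{1/2} \le C'_l\, T_{m(k)},
\ees
where $m(k)$ is the smallest integer with $4m(k)-2l-n+1 \ge 2(k+1)$, so $m(k) \approx k/2$ for large $k$. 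The classical Denjoy--Carleman theorem on $\R$ applied to $H_{l,j}$ then forces $H_{l,j} \equiv 0$, whence $G_{l,j} \equiv 0$, $a_{l,j} \equiv 0$ for every $l,j$, and finally $\what f \equiv 0$.

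The main obstacle is the final book-keeping: one must verify that after the substitution $k = 2m$, the bound $\|H_{l,j}^{(k)}\|_\infty \le C'_l\, T_{m(k)}$ turns the required Denjoy--Carleman condition $\sum_k \|H_{l,j}^{(k)}\|_\infty^{-1/k} = \infty$ into precisely the hypothesis \eqref{Carl-cond-Rn}, independently of $l,j$. One must also control $\int_0^1 r^{2p}\, |G_{l,j}|^2\, dr$ for small $p$, which requires interpolating the integer-$m$ bounds through log-convexity, using $\int_0^\infty r^{n-1}|a_{l,j}|^2\, dr \le \|f\|_2^2$ as the starting point. Once these calibrations are in place, the argument goes through mechanically.
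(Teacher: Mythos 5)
Your argument is correct in substance but follows a genuinely different route from the paper. The paper also passes to the Fourier side and derives the same moment identities $\int P(\xi)\widehat f(\xi)\,d\xi=0$, but then it puts the measure $d\mu=|\widehat f(\xi)|\,d\xi$ into the framework of de Jeu's theorem (Lemma \ref{lempolydense}): the Carleman condition on the moments $S_j(2m)$ forces the polynomials to be dense in $L^1(\R^n,d\mu)$, so $\overline{\widehat f}$ can be approximated by polynomials and the moment identities give $\|\widehat f\|_2^2=0$ directly. You instead decompose $\widehat f$ into spherical harmonics, use the fact that polynomials are spanned by $|x|^{2m}$ times harmonic polynomials to convert the moment identities into vanishing of all even moments of each radial profile $G_{l,j}$ on $(0,\infty)$, and then the even extension plus the one-dimensional Fourier transform reduces everything to the classical Denjoy--Carleman theorem on $\R$. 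This is a legitimate and more elementary reduction (it replaces de Jeu's multidimensional result by the classical one-dimensional theorem), and all the steps check out: the weighted $L^2$ bounds on $G_{l,j}$ follow from Plancherel in polar coordinates, and the final re-indexing $k\mapsto m(k)\approx k/2$ with exponent $2m(k)/k\to 1$ is handled exactly as in the paper by the lemma that $\sum a_m=\infty$ implies $\sum a_m^{1+c/m}=\infty$. Two small points of book-keeping you should make explicit: your displayed bound on $\int|r|^k|G_{l,j}|\,dr$ should carry the low-frequency contribution, i.e.\ it reads $C'_l\left(\|f\|_2+T_{m(k)}\right)$ rather than $C'_l\,T_{m(k)}$ (or one splits into the cases where $\widehat f$ is or is not supported in the unit ball, as the paper does), and the contribution from $\{|r|\le 1\}$ is bounded directly by $\|f\|_2$ via Cauchy--Schwarz against $r^{n-1}\,dr$, so no log-convexity interpolation is actually needed there. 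The trade-off between the two approaches is worth noting: the paper's route via de Jeu generalizes verbatim to $L^p$ for $p\in[1,2]$ (Hausdorff--Young replacing Plancherel) and transfers to the spherical transform on symmetric spaces, which is the point of the paper; your route leans on the rotation structure of $\R^n$ and on Plancherel, so it is tied to $p=2$ as written and would not transfer as readily.
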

The following analogue of Chernoff's result was recently proved by the authors for Riemannian symmetric space $G/K$ of noncompact type involving iterates of the Laplace-Beltrami operator $\Delta$, where $G$ is a connected noncompact semisimple Lie group with finite center and $K$ is a maximal compact subgroup.
\begin{thm}[\cite{BPR}]\label{thm-cher-X-weaker}
Let $f\in C^\infty(G/K)$ be such that $\Delta^m f\in L^2(G/K)$, for all $m\in \N\cup \{0\}$ and
\be \label{Carl-cond-X-weak}
\sum_{m\in\N}\|\Delta^mf\|_2^{-\frac{1}{2m}}=\infty.
\ee
If $f$ vanishes on a nonempty open set in $G/K$ then $f$ vanishes identically.
\end{thm}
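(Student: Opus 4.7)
My plan is to transport the local vanishing of $f$ into a one-variable quasi-analytic problem via the half-wave propagator on $G/K$, and then invoke the classical one-dimensional Denjoy--Carleman theorem. Let $U \subset G/K$ be the open set on which $f$ vanishes, fix a geodesic ball $B(x_0, 2\delta) \subset U$, and for a test function $\phi \in C_c^\infty(B(x_0,\delta))$ set
\[
F(t) := \bigl\langle \cos(t\sqrt{-\Delta})f,\, \phi \bigr\rangle_{L^2(G/K)}, \qquad t \in \R.
\]
Since $-\Delta$ is a non-negative self-adjoint operator, $\cos(t\sqrt{-\Delta})$ is a contraction on $L^2(G/K)$; together with the hypothesis $\Delta^m f \in L^2$ for every $m$, this makes $F$ infinitely differentiable on $\R$.

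Two inputs then drive the argument. First, finite propagation speed for the wave equation $\partial_t^2 u = \Delta u$ on the complete Riemannian manifold $G/K$ implies that $u(\cdot,t) := \cos(t\sqrt{-\Delta})f$ vanishes at every $x$ with $B(x,|t|) \subset \{f = 0\}$. For $x \in \mathrm{supp}(\phi)$ and $|t|<\delta$ we have $B(x,|t|) \subset B(x_0, 2\delta) \subset U$, so $F \equiv 0$ on $(-\delta,\delta)$. Second, using $\partial_t^2\cos(t\sqrt{-\Delta}) = \Delta\cos(t\sqrt{-\Delta})$ I would show
\[
|F^{(2m)}(t)| \le \|\Delta^m f\|_2\,\|\phi\|_2,
\]
while a Cauchy--Schwarz inequality applied to the spectral measure of $-\Delta$ would yield
\[
|F^{(2m+1)}(t)| \le \|\phi\|_2\bigl(\|\Delta^m f\|_2\,\|\Delta^{m+1}f\|_2\bigr)^{1/2}.
\]
Denoting these uniform bounds by $M_n$, the hypothesis immediately gives $\sum_n M_n^{-1/n} \ge \sum_m \|\Delta^m f\|_2^{-1/(2m)} = \infty$, so the classical Denjoy--Carleman theorem on $\R$ forces $F$ to vanish identically. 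Varying $\phi$ over a dense subfamily of $L^2(B(x_0,\delta))$ then shows that $u(x,t) \equiv 0$ on the open cylinder $B(x_0,\delta)\times\R$.

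The main obstacle is now the passage from this local vanishing to global vanishing of $f$. Because $G/K$ is a real-analytic Riemannian manifold and $\partial_t^2-\Delta$ has analytic coefficients, Holmgren's uniqueness theorem applied across non-characteristic hypersurfaces inside the open vanishing set should propagate the vanishing of $u$ to all of $G/K\times\R$, so that evaluating at $t=0$ yields $f \equiv 0$. Making this unique-continuation step rigorous is the delicate part: one must be careful about the regularity of the $L^2$-solution and about patching together the local Holmgren conclusions by a connectedness argument; an alternative would be to first smooth $f$ with a $G$-invariant spectral multiplier that preserves the Chernoff divergence condition. A secondary, more bookkeeping-type issue is justifying the spectral interpolation bound for the odd-order derivatives, which rests on the log-convexity of $m\mapsto\|\Delta^m f\|_2$ supplied by the spectral theorem.
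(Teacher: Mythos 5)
Your argument is correct in outline, but it takes a genuinely different route from the paper's. The paper deduces this statement (and its $L^p$ strengthening, Corollary \ref{Cor-X}) by first translating and averaging over $K$ to reduce to a $K$-biinvariant function vanishing near the origin, and then applying Theorem \ref{thm-cher-X}: one forms the measure $d\mu(\la)=|\widehat f(\la)|\,|\phi_\la(x_0)|\,|{\bf c}(\la)|^{-2}\,d\la$ on $\frak a^*$, checks via Hausdorff--Young that its moments satisfy Carleman's condition, invokes de Jeu's theorem (Lemma \ref{lempolydense}) to get density of $W$-invariant polynomials in $L^1(d\mu)$, and uses the Harish-Chandra isomorphism to convert the vanishing of $Df(x_0)$ for all $D\in{\bf D}(G/K)$ into the vanishing of all polynomial moments, whence $\widehat f=0$. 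You instead reduce to one variable via $F(t)=\langle\cos(t\sqrt{-\Delta})f,\phi\rangle$, use finite propagation speed to kill $F$ on an interval, the one-dimensional Denjoy--Carleman theorem to kill $F$ on all of $\R$, and global unique continuation to recover $f\equiv 0$; each step is standard, and the odd-order bound does follow from the log-convexity of $m\mapsto\|(-\Delta)^m f\|_2$ supplied by the spectral theorem, so I see no essential gap. The trade-off is clear: your proof uses only completeness and real-analyticity of the metric, so it works on any complete analytic Riemannian manifold and needs no spherical harmonic analysis; but it is locked to $p=2$ (the wave propagator is not $L^p$-bounded on $G/K$ for $p\neq2$, so the paper's $p\in[1,2]$ range is out of reach this way), and since finite propagation speed genuinely needs vanishing on an open set, it cannot give the single-point refinement (Theorem \ref{thm-cher-X}) that is the main point of the paper. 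Two things to nail down when writing it up: quote the global (John) form of Holmgren's theorem for the wave operator on an analytic manifold, since the local statement alone does not immediately propagate the vanishing from the cylinder $B(x_0,\delta)\times\R$ to all of $G/K\times\R$; and use a form of the Denjoy--Carleman theorem that does not presuppose log-convexity of the derivative bounds (divergence of $\sum_n M_n^{-1/n}$ still suffices after passing to the log-convex minorant).
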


As is evident from the above statement that the vanishing condition on the function $f$ is much stronger here in comparison to Theorem \ref{thm-intro}. Similar results under analogous conditions of vanishing of the function on nonempty open sets have recently been proved in a wide variety of situations  \cite{GT1, GT2}. In \cite{BGST} an analogue of Theorem \ref{thm-intro} for the Heisenberg groups was proved under the condition of vanishing of the function and its partial derivatives at a single point albeit for a restricted class of $L^2$ functions.
One of the interesting problems which comes in the way of extending Theorem \ref{thm-intro} to the setup of certain homogeneous spaces of Lie groups, is to do with the choice of differential operators which are required to annihilate the given function at a particular point $x_0$. Indeed, in the setting of $G/K$, the vanishing of $G$-invariant differential operators applied to a function at a point is analogous to the vanishing of
partial derivatives of a function at a point of $\R^n$. For the rank one symmetric spaces of noncompact type one
knows that such differential operators are polynomials in the Laplace-Beltrami operator. On the other hand, it was observed by Chernoff that Theorem \ref{thm-intro} is false under the weaker assumption of vanishing of $\Delta_{\R^n}^m f(x_0)$, for all $m\in \N \cup \{0\}$ instead of vanishing of all partial derivatives of $f$ at $x_0$. Likewise, in \cite[Example 3.7]{BPR} we have shown that an exact analogue of Chernoff's result is not true for Riemannian symmetric spaces $G/K$ of noncompct type if we restrict ourselves only to the class of $G$-invariant differential operators. This is one of the reasons why a much stronger  hypothesis of vanishing of the function on an open set was used in Theorem \ref{thm-cher-X-weaker}.

In this paper we first suggest an alternative way of proving Theorem \ref{thm-intro} using an important result of M. de Jeu (see Lemma \ref{lempolydense}). This alternative approach is motivated by the observation that in Theorem \ref{thm-intro} if we restrict ourselves only to the class of radial functions then it is possible to prove the result under the assumption that $\Delta_{\R^n}^m f(0)$ vanishes for all $m\in \N \cup \{0\}$. We refer the reader to Remark \ref{counter1} for more detailed discussion in this regard. This alternative approach, in fact, helps us to prove the following generalization of Theorem \ref{thm-intro}.

\begin{thm}
\label{thm-cher-Rn}
Let $f:\R^n\to\C$ be a smooth function and $p\in[1, 2]$. Suppose that for all $m\in \N\cup \{0\}$, $\Delta_{\R^n}^m f\in L^p(\R^n)$, and
\be \label{Carl-cond-Rn}
\sum_{m\in \N}\|\Delta_{\R^n}^m f\|_p^{-\frac{1}{2m}}=\infty.
\ee
If there exists $x_0\in \R^n$, such that $f$ and all its partial derivatives $\partial^{\alpha}f=\frac{\partial^{\alpha_1+\ldots\alpha_n}f}{\partial x_1^{\alpha_1}\ldots\partial x_n^{\alpha_n}}$, $\alpha\in(\N\cup \{0\})^n$, vanish at $x_0$ then $f$ vanishes identically.
\end{thm}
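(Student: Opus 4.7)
The plan is to pass to the Fourier side and then invoke the multi-dimensional Carleman-type determinacy theorem of de Jeu mentioned in the introduction (Lemma \ref{lempolydense}). After translating so that $x_0=0$, the aim is to realize the hypothesis as (i) moment decay of $\hat f$ and (ii) vanishing of every polynomial moment of the complex measure $d\nu(\xi)=\hat f(\xi)\,d\xi$, and then apply de Jeu's lemma to conclude $\nu=0$, hence $\hat f\equiv 0$ and $f\equiv 0$.

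For $p\in[1,2]$, Hausdorff--Young gives $\hat f\in L^{p'}(\R^n)$, and since the Fourier transform intertwines $\Delta_{\R^n}$ with multiplication by $-4\pi^{2}|\xi|^2$, we obtain
\[
\bigl\||\xi|^{2m}\hat f\bigr\|_{p'}\le C\,\|\Delta_{\R^n}^{m} f\|_p\qquad(m\in\N\cup\{0\}).
\]
A single H\"older estimate against the fixed weight $(1+|\xi|^2)^{-N}\in L^{p}(\R^n)$ (with $N$ chosen larger than $n/(2p)$) then upgrades this to $\xi^{\alpha}\hat f\in L^{1}(\R^n)$ for every multi-index $\alpha$. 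In particular $\hat f\in L^{1}$, so Fourier inversion is valid at the origin, and the hypothesis $\partial^{\alpha}f(0)=0$ for every $\alpha$ translates to
\[
\int_{\R^n}\xi^{\alpha}\,\hat f(\xi)\,d\xi=0\qquad\text{for every }\alpha\in(\N\cup\{0\})^{n};
\]
equivalently, all polynomial moments of the complex measure $\nu$ vanish.

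To apply Lemma \ref{lempolydense} one must verify a Carleman condition for $|\nu|$ in each coordinate direction $j$. The same H\"older estimate gives
\[
\int_{\R^n}\xi_{j}^{\,2m}\,d|\nu|\le\bigl\|(1+|\xi|^2)^{-N}\bigr\|_p\,\bigl\|(1+|\xi|^2)^{N}|\xi|^{2m}\hat f\bigr\|_{p'},
\]
and expanding $(1+|\xi|^2)^{N}=\sum_{k=0}^{N}\binom{N}{k}|\xi|^{2k}$ together with the bound of the previous paragraph reduces the right-hand side to a fixed linear combination of $\|\Delta_{\R^n}^{m+k}f\|_p$ with $0\le k\le N$. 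Taking $2m$-th roots and using that any multiplicative constant raised to the power $-1/(2m)$ tends to $1$, the divergence in \eqref{Carl-cond-Rn} is transferred to $\sum_{m}\bigl(\int\xi_{j}^{\,2m}\,d|\nu|\bigr)^{-1/(2m)}=\infty$. The main technical obstacle is exactly this last bookkeeping step: one must check that replacing $\|\Delta_{\R^n}^{m}f\|_p$ by $\max_{0\le k\le N}\|\Delta_{\R^n}^{m+k}f\|_p$ and using the exponent $-1/(2m)$ in place of $-1/(2(m+k))$ do not destroy the Carleman divergence. This is standard once the sequence $\|\Delta_{\R^n}^{m}f\|_p$ is replaced by its log-convex regularization, after which Lemma \ref{lempolydense} applies and forces $\nu=0$.
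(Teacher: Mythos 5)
Your overall strategy coincides with the paper's: pass to the Fourier side, establish finiteness of all moments of $|\widehat f\,|\,d\xi$ via H\"older and Hausdorff--Young, convert $\partial^\alpha f(0)=0$ into the vanishing of all polynomial moments of $\widehat f\,d\xi$, and invoke Lemma \ref{lempolydense}. The one step you yourself flag as the ``main technical obstacle'' is, however, a genuine gap. Your weighting by $(1+|\xi|^2)^{-N}$ and the binomial expansion yield
\begin{equation*}
S_j(2m)\;\le\;C\sum_{k=0}^{N}\binom{N}{k}\,\|\Delta_{\R^n}^{m+k}f\|_p\;\le\;C'\max_{0\le k\le N}\|\Delta_{\R^n}^{m+k}f\|_p ,
\end{equation*}
and you then need $\sum_m S_j(2m)^{-1/(2m)}=\infty$. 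For an arbitrary positive sequence $M_m$ the implication $\sum_m M_m^{-1/(2m)}=\infty \Rightarrow \sum_m \bigl(\max_{0\le k\le N}M_{m+k}\bigr)^{-1/(2m)}=\infty$ is false (take $M_{2j}=1$ and $M_{2j+1}=\epsilon_j^{-2(2j+1)}$ with $\sum_j\epsilon_j<\infty$), so some structural input on $M_m=\|\Delta_{\R^n}^m f\|_p$ is indispensable. The log-convex regularization you appeal to does not supply it: the regularized sequence is a \emph{minorant} of $M_m$ and therefore cannot be inserted into an upper bound for $S_j(2m)$, while log-convexity of $\|\Delta_{\R^n}^m f\|_p$ itself is clear only for $p=2$.

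The paper removes the difficulty at the source, and you should do the same: split the moment integral at $|\xi|=1$ instead of using a global weight. On $\{|\xi|<1\}$ one has $|\xi_j|^{2m}\le 1$, so that piece is bounded by a constant times $\|f\|_p$ uniformly in $m$; on $\{|\xi|\ge 1\}$ write $|\xi|^{2m}=|\xi|^{2(m+n)}\,|\xi|^{-2n}$ and H\"older gives $A_{n,p}\|\Delta_{\R^n}^{m+n}f\|_p$. The constant term is then absorbed because $\|\Delta_{\R^n}^{m+n}f\|_p$ is bounded below by $\|\chi_{\{|\xi|\ge1\}}\widehat f\,\|_{p'}>0$ uniformly in $m$ (the case $\operatorname{supp}\widehat f\subset B(0,1)$ being trivial), leaving the single-term bound $S_j(2m)\le C_{n,p}\|\Delta_{\R^n}^{m+n}f\|_p$ with a \emph{fixed} shift $n$; the divergence of $\sum_m S_j(2m)^{-1/(2m)}$ then follows from the elementary fact that $\sum_m a_m=\infty$ implies $\sum_m a_m^{1+n/m}=\infty$. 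A further, minor, omission: Lemma \ref{lempolydense} gives only density of polynomials in $L^1(\R^n,|\widehat f\,|\,d\xi)$, not ``$\nu=0$'' outright; one must still note that $\overline{\widehat f}\in L^1(|\widehat f\,|\,d\xi)$ (since $\widehat f\in L^1\cap L^{p'}\subset L^2$), approximate it by polynomials, and use the vanishing of all moments to conclude $\int|\widehat f\,|^2=0$. That duality step is easy but it is what actually finishes the proof.
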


We will use the idea of this proof to prove analogues of Chernoff's theorem for Riemannian symmetric spaces of compact and noncompact type, with vanishing condition at a single point but for a restricted class of functions which are analogous to radial functions.
In the context of Riemannian symmetric spaces of noncompact and compact type we prove the following analogues of Theorem \ref{thm-cher-Rn}. Let $\Delta$ be the Laplace-Beltrami operator on the Riemannian symmetric space $G/K$ of noncompact type and $\tilde\Delta$ be that on symmetric space $U/K$ of compact type. We refer the reader to sections $3$ and $4$ for the meaning of symbols.

\begin{thm} \label{thm-cher-X}
Let $f\in C^\infty(G//K)$ and $p\in [1,2]$. Suppose $\Delta^m f\in L^p(G//K)$, for all $m\in \mathbb N\cup \{0\}$ and
\be \label{Carl-cond-X}
\sum_{m\in \mathbb N} \|\Delta^m f\|_p^{-\frac{1}{2m}} =\infty.
\ee
If there exists $x_0\in G/K$, such that $Df(x_0)=0$, for all $D\in {\bf D}(G/K)$ then f is identically zero.
\end{thm}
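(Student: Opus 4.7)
The plan is to mirror the proof of Theorem~\ref{thm-cher-Rn} for radial functions on $\R^n$: use the spherical Fourier transform to rewrite the differential hypothesis at $x_0$ as a polynomial orthogonality against $\widehat f$, then invoke Lemma~\ref{lempolydense} (de Jeu's multivariate Denjoy--Carleman theorem) to conclude that $\widehat f\equiv 0$ and hence $f\equiv 0$. The main ingredient is the Harish-Chandra isomorphism $\gamma\colon\mathbf D(G/K)\xrightarrow{\sim}S(\mathfrak a_\C)^W$, which satisfies $D\phi_\lambda=\gamma(D)(i\lambda)\phi_\lambda$. Spherical inversion at $x_0$ yields
\begin{equation*}
Df(x_0)=c_0\int_{\mathfrak a^*}\gamma(D)(i\lambda)\,\widehat f(\lambda)\,\phi_\lambda(x_0)\,|c(\lambda)|^{-2}\,d\lambda.
\end{equation*}
As $D$ ranges over $\mathbf D(G/K)$, $\gamma(D)$ sweeps out $S(\mathfrak a_\C)^W$; since $\widehat f(\lambda)$, $\phi_\lambda(x_0)$ (as a function of $\lambda$) and $|c(\lambda)|^{-2}$ are all $W$-invariant, a symmetrization of arbitrary polynomials upgrades the hypothesis $Df(x_0)=0$ (all $D$) to
\begin{equation*}
\int_{\mathfrak a^*}p(\lambda)\,\widehat f(\lambda)\,\phi_\lambda(x_0)\,|c(\lambda)|^{-2}\,d\lambda=0\qquad\text{for every }p\in\C[\mathfrak a^*].
\end{equation*}

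The second ingredient is the spectral identity $\Delta\phi_\lambda=-(|\lambda|^2+|\rho|^2)\phi_\lambda$, which Fourier-transforms $\Delta^m$ into multiplication by $(-1)^m(|\lambda|^2+|\rho|^2)^m$. For $p=2$, Plancherel immediately gives $\|\Delta^m f\|_2^2=\int(|\lambda|^2+|\rho|^2)^{2m}|\widehat f|^2|c|^{-2}\,d\lambda$; for $p\in[1,2)$, the Hausdorff--Young inequality for the spherical transform on $G/K$ supplies the one-sided analogue $\|(|\lambda|^2+|\rho|^2)^m\widehat f\|_{L^{p'}(|c|^{-2}d\lambda)}\le C\|\Delta^m f\|_p$. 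Using $|\phi_\lambda(x_0)|\le 1$ for real $\lambda$, the assumed divergence $\sum_m\|\Delta^m f\|_p^{-1/(2m)}=\infty$ translates into a multidimensional Carleman moment condition on the positive measure $|\widehat f(\lambda)\phi_\lambda(x_0)|\,|c(\lambda)|^{-2}\,d\lambda$ on $\mathfrak a^*$. Lemma~\ref{lempolydense} now yields density of polynomials in the corresponding $L^1$-space, which together with the orthogonality above forces $\widehat f(\lambda)\,\phi_\lambda(x_0)=0$ for almost every $\lambda$. Since $\lambda\mapsto\phi_\lambda(x_0)$ is a nonzero entire function on $\mathfrak a_\C^*$, its real zero set has Lebesgue measure zero, so $\widehat f\equiv 0$ and therefore $f\equiv 0$ by injectivity of the spherical Fourier transform.

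The step I expect to be the main obstacle is the transfer from $\|\Delta^m f\|_p$ to an honest moment bound on the measure $|\widehat f|\,|c|^{-2}\,d\lambda$ when $p<2$: Plancherel is unavailable and one must rely on the Hausdorff--Young-type inequality for the Helgason--Fourier transform on a noncompact symmetric space, keeping careful track of the polynomially growing Plancherel density $|c(\lambda)|^{-2}$ and the exponential volume growth of $G/K$. A secondary, routine but nontrivial point is reconciling the inhomogeneous weight $(|\lambda|^2+|\rho|^2)^m$ produced by the spectral identity with the homogeneous weight $|\lambda|^{2m}$ used in de Jeu's formulation; since $|\rho|$ is a fixed constant, this should follow from a direct comparison.
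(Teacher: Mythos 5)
Your proposal is correct and follows essentially the same route as the paper's own proof: the same measure $|\widehat f(\lambda)|\,|\phi_\lambda(x_0)|\,|{\bf c}(\lambda)|^{-2}\,d\lambda$, the same use of the Hausdorff--Young inequality and the spectral identity to verify de Jeu's Carleman moment condition, the same passage through the Harish-Chandra isomorphism to turn $Df(x_0)=0$ into polynomial orthogonality, and the same final appeal to the analyticity of $\lambda\mapsto\phi_\lambda(x_0)$. The only cosmetic difference is that you symmetrize to get orthogonality against all polynomials, whereas the paper works directly with $W$-invariant polynomials and their density in $L^1(\mathfrak a^*,d\mu)^W$; the substance is identical.
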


This theorem is false if $p>2$ (see Remark \ref{counter1}).

\begin{thm} \label{thm-cher-com}
Suppose $f\in C^\infty(U//K)$ satisfies the condition
\be \label{Carl-cond-com}
\sum_{m\in \mathbb N} \|\tilde{\Delta}^m f\|_p^{-\frac{1}{2m}} =\infty,
\ee
for some $p\in [1,\infty]$. If $Df$ vanishes at the identity coset $o$ for all $D\in {\bf D}(U/K)$, then f vanishes identically.
\end{thm}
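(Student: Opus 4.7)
The plan is to adapt the approach outlined for Theorem \ref{thm-cher-Rn}: spherical-Fourier expand $f$, reinterpret $Df(o)=0$ for all $D\in\mathbf{D}(U/K)$ as the vanishing of every polynomial moment of a discrete signed measure $\nu$ on $\fa^*$, establish a Carleman condition on the moments of $\nu$, and invoke de Jeu's lemma (Lemma \ref{lempolydense}) to force $\nu=0$. A preliminary reduction to $p=1$ is immediate: on the compact space $U/K$ of total volume $V$, H\"older's inequality gives $\|\tilde\Delta^m f\|_1\leq V^{1-1/p}\|\tilde\Delta^m f\|_p$, and since $V^{-(1-1/p)/(2m)}\to 1$, divergence of (\ref{Carl-cond-com}) at the prescribed $p$ forces divergence at $p=1$.

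Next I would write $f=\sum_{\pi\in\what U_K}d_\pi\what f(\pi)\phi_\pi$, where $\what U_K$ is the set of (equivalence classes of) irreducible class-one representations of $U$, $\phi_\pi$ the associated zonal spherical function normalised by $\phi_\pi(o)=1$ and $|\phi_\pi|\leq 1$, $\la_\pi$ the highest weight, $d_\pi=\dim\pi$ and $\what f(\pi)=\int_U f\,\overline{\phi_\pi}\,du$. Each $\phi_\pi$ satisfies $\tilde\Delta\phi_\pi=-c_\pi\phi_\pi$ with $c_\pi=|\la_\pi+\rho|^2-|\rho|^2\geq|\la_\pi|^2$, and self-adjointness of $\tilde\Delta$ together with $|\phi_\pi|\leq 1$ yields the key pointwise bound $c_\pi^j\,|\what f(\pi)|\leq\|\tilde\Delta^j f\|_1$ for every $j\geq 0$. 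Smoothness of $f$ makes every series below converge absolutely. The Harish--Chandra isomorphism $\gamma\colon\mathbf{D}(U/K)\to\polya^W$ realises each $D$ as $D\phi_\pi=\gamma(D)(\la_\pi+\rho)\phi_\pi$; since $\gamma$ maps onto $\polya^W$, evaluating the spherical expansion of $Df$ at $o$ translates the hypothesis into
\[\sum_\pi d_\pi\,\what f(\pi)\,P(\la_\pi+\rho)=0\qquad\text{for every }P\in\polya^W.\]
Setting $\nu=\sum_\pi d_\pi\what f(\pi)\sum_{w\in W}\delta_{w(\la_\pi+\rho)}$, a $W$-symmetrised discrete signed measure on $\fa^*$, and averaging an arbitrary polynomial $Q$ over $W$, one sees the above is equivalent to the vanishing of \emph{every} polynomial moment of $\nu$.

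The heart of the argument is the Carleman estimate for the absolute moments
\[M_m:=\int_{\fa^*}|\xi|^{2m}\,d|\nu|(\xi)=|W|\sum_\pi d_\pi\,|\what f(\pi)|\,|\la_\pi+\rho|^{2m}.\]
Choose an integer $N$ large enough that $\sum_{\pi\neq\mathrm{triv}} d_\pi\,|\la_\pi+\rho|^{-2N}<\infty$; this is possible by the Weyl dimension formula since $d_\pi$ grows polynomially in $|\la_\pi|$ and the dominant weights form a lattice of finite rank in $\fa^*$. For $|\la_\pi+\rho|^2\geq 2|\rho|^2$ one has $c_\pi\geq |\la_\pi+\rho|^2/2$ and hence $|\la_\pi+\rho|^{2m}/c_\pi^{m+N}\leq 2^{m+N}/|\la_\pi+\rho|^{2N}$. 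Applying the pointwise bound with $j=m+N$ to this tail, and handling the trivial representation together with the finitely many small-$\la_\pi$ terms separately, should give
\[M_m\leq C_1\,(2|\rho|^2)^m+C_2\,2^m\,\|\tilde\Delta^{m+N}f\|_1.\]
Extracting $(-1/(2m))$-th roots reduces divergence of $\sum_m M_m^{-1/(2m)}$ to divergence of $\sum_m\|\tilde\Delta^{m+N}f\|_1^{-1/(2m)}$, and a routine comparison shows that shifting the index by a fixed integer $N$ does not destroy the Carleman divergence (\ref{Carl-cond-com}) at $p=1$.

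Once the Carleman condition is in place, de Jeu's multivariate Denjoy--Carleman theorem (Lemma \ref{lempolydense}) asserts that $\nu$ must be the zero measure, and hence $\what f(\pi)=0$ for every $\pi\in\what U_K$ and $f\equiv 0$. The main technical obstacle I foresee is the moment estimate itself: controlling the tail over high-weight representations via Weyl's dimension formula, and verifying that the index shift $m\mapsto m+N$ preserves the Carleman divergence. Everything else---the spherical expansion, the Harish--Chandra identification, and the $W$-symmetrisation that upgrades vanishing on $W$-invariants to vanishing on all polynomials---is formal.
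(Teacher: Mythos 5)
Your proposal is correct and follows essentially the same route as the paper's proof: reduction to $p=1$ by finiteness of the volume, spherical Fourier expansion, translation of $Df(o)=0$ via the Harish--Chandra isomorphism into the vanishing of all $W$-invariant polynomial moments of the discrete spectral measure, a Carleman estimate on the moments obtained from $\|\tilde\Delta^{m+N}f\|_1$ with an index shift, and de Jeu's lemma to conclude. The only (immaterial) differences are that you $W$-symmetrise the measure instead of approximating the $W$-invariant function $\overline{\widetilde f}$ by $W$-invariant polynomials, and you split off the finitely many small weights where the paper instead uses the uniform bound $|\langle \mu+\rho,\mu+\rho\rangle|\leq C\,|\langle \mu+\rho,\mu+\rho\rangle-\langle\rho,\rho\rangle|$ coming from the discreteness of $\Lambda^{+}(U/K)$.
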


This theorem may not hold true if the identity coset $o$ is replaced by a different coset $x_0K$ (see Remark \ref{counter2}).
The proof of Theorem \ref{thm-intro} depends heavily on the theory of unbounded self adjoint operators on Hilbert spaces and it also uses the structure of dilation which is available on $\R^n$. The idea of this proof does not seem to work even for the case $p=2$ to obtain Theorem \ref{thm-cher-X} and Theorem \ref{thm-cher-com}  due to the lack of dilation structure in the context of Riemannian symmetric spaces. 
However, the alternative proof of Theorem \ref{thm-intro} which we have suggested uses a different technique altogether. 
The main idea behind this approach is to suitably use the connection between the Carleman type condition (\ref{Carl-cond-Rn})- (\ref{Carl-cond-com}) and a result regarding polynomial approximation proved in \cite{Dj}. This same idea then works for all $p\in [1,2]$. It is this method which we have employed to prove Theorem \ref{thm-cher-X} and Theorem \ref{thm-cher-com}.
Whether this method can be suitably modified to prove an analogue of Theorem \ref{thm-cher-X} for functions which are not $K$-biinvariant is an open question. However, it seems to us that to prove an exact analogue of Theorem \ref{thm-cher-Rn} for functions on $G/K$, it is perhaps necessary to consider a larger class of differential operators which are not necessarily $G$-invariant. For a discussion on quasi-analyticity and polynomial approximations on Lie groups, we refer the reader to \cite{DJ2} and the references therein.

This paper is organized as follows. In section $2$ we prove Theorem \ref{thm-cher-Rn}. In section $3$ and section $4$ we prove Theorem \ref{thm-cher-X} and Theorem \ref{thm-cher-com} respectively.




\section{Chernoff's Theorem for Euclidean spaces}
In this section we prove Theorem \ref{thm-cher-Rn}. The proof depends on the following result from approximation theory.
\begin{lem}[\cite{Dj}] \label{lempolydense}
Let $\mu$ be a finite Borel measure on $\R^n$ such that, for all $m\in \N$ and $1\leq j\leq n$ the quantities $S_j(m)$, defined by
\be \label{defn-Sj}
S_j(m)=\int_{\R^n}|\la_j|^m ~ d\mu(\la),
\ee
are finite. If for each $j\in \{1, \cdots, n\}$, the sequence $\{S_j(2m)\}_{m=1}^{\infty}$ satisfies the Carleman's condition
\be \label{carlcond}
\sum_{m\in \N} S_j(2m)^{-\frac{1}{2m}}= \infty,
\ee
then the polynomials constitute a dense subspace of $L^1(\R^n, d\mu)$.
\end{lem}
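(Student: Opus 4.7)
The plan is to proceed by a duality argument. By the Hahn-Banach theorem, the polynomials are dense in $L^1(\R^n, d\mu)$ if and only if the zero function is the only $g\in L^\infty(\R^n, d\mu)$ with $\int_{\R^n} p(\la)g(\la)\,d\mu(\la)=0$ for every polynomial $p$. I would fix such a $g$, form the bounded complex measure $d\nu := g\,d\mu$, and observe that all monomial moments $\int \la^\a\,d\nu$ vanish. Since a finite complex measure on $\R^n$ is determined by its Fourier transform, it suffices to prove $\what\nu(\xi):=\int e^{-i\xi\cdot\la}\,d\nu(\la)\equiv 0$; then $\nu=0$, hence $g=0$ $\mu$-a.e.

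The finiteness of the $S_j(m)$ together with $\|g\|_\infty<\infty$ lets one differentiate under the integral sign to any order, giving $\partial^\a\what\nu(\xi)=(-i)^{|\a|}\int\la^\a e^{-i\xi\cdot\la}\,d\nu(\la)$ and in particular $\partial^\a\what\nu(0)=0$ for every multi-index $\a$. The main tool will be the classical one-variable Denjoy-Carleman theorem applied in each coordinate direction, together with two elementary observations: (i) each sequence $\{S_j(m)\}_m$ is log-convex by Cauchy-Schwarz, so the hypothesis $\sum_m S_j(2m)^{-1/(2m)}=\infty$ is in fact equivalent to the full Carleman condition $\sum_m S_j(m)^{-1/m}=\infty$; and (ii) for any log-convex $\{M_k\}$ with $\sum_k M_k^{-1/k}=\infty$, a smooth function on $\R$ satisfying $|f^{(k)}|\leq CM_k$ and vanishing with all its derivatives at a single point must be identically zero.

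I would then prove by induction on $k\in\{0,1,\ldots,n\}$ that $\what\nu$ vanishes on the coordinate subspace $V_k=\{(t_1,\ldots,t_k,0,\ldots,0)\}$. The base $k=0$ is $\what\nu(0)=\int 1\cdot g\,d\mu=0$. For the step $k-1\to k$, fix $(t_1,\ldots,t_{k-1})$ and set $\psi(s):=\what\nu(t_1,\ldots,t_{k-1},s,0,\ldots,0)$. The uniform bound $|\psi^{(m)}(s)|\leq\|g\|_\infty S_k(m)$ combined with the Carleman hypothesis on $S_k$ makes $\psi$ quasi-analytic, so it suffices to check $\psi^{(m)}(0)=0$ for every $m\in\N$. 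The function $A_m(t_1,\ldots,t_{k-1}):=\psi^{(m)}(0)=(-i)^m\int\la_k^m e^{-i\sum_{j<k} t_j\la_j}\,d\nu$ is the restricted Fourier transform of the measure $\la_k^m\,d\nu$, which still has all monomial moments zero. An inner inductive application of the same 1D Denjoy-Carleman argument in each of the variables $t_1,\ldots,t_{k-1}$, with mixed moments estimated via repeated Cauchy-Schwarz, for example $\int|\la_j|^p|\la_k|^m\,d|\nu|\leq\|g\|_\infty S_j(2p)^{1/2}S_k(2m)^{1/2}$, yields $A_m\equiv 0$ and completes the outer induction. Hence $\what\nu\equiv 0$ on $\R^n$.

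The main delicate point is the bookkeeping for the Cauchy-Schwarz splittings that control the mixed moments: each splitting replaces a moment $S_j(p)$ by the square root $S_j(2p)^{1/2}$, but the identity $(S_j(2p)^{1/2})^{-1/p}=S_j(2p)^{-1/(2p)}$ shows that the even-index Carleman hypothesis of the lemma is preserved exactly, and the log-convexity of the subsequence $\{S_j(2p)\}_p$ (inherited from convexity of $p\mapsto\log S_j(p)$) ensures that the one-dimensional Denjoy-Carleman theorem can be applied cleanly at every stage. This preservation of the Carleman condition under Cauchy-Schwarz is what allows the iterated one-variable argument to close and the $n$-dimensional result to follow from its one-dimensional predecessor.
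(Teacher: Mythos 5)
The paper offers no proof of this lemma: it is imported verbatim from de Jeu \cite{Dj}, so there is nothing internal to compare your argument against. On its own terms your proposal is correct and follows the classical route (in substance, de Jeu's own): dualize via Hahn--Banach to a bounded annihilating density $g$, pass to the finite complex measure $d\nu=g\,d\mu$ with vanishing moments, and kill $\widehat{\nu}$ by one-dimensional quasi-analyticity applied coordinate by coordinate. Two points that a full write-up must make explicit are essentially the ones you flag. First, the duality step presupposes that the polynomials lie in $L^1(\R^n,d\mu)$ at all; this follows from the finiteness of the $S_j(m)$ via the generalized H\"older inequality and deserves a line. Second, the nested induction has to be formulated for a whole family of measures (all the $\la_k^{m}\,d\nu$, $\la_{k-1}^{p}\la_k^{m}\,d\nu$, and so on), not just for $\nu$; your bookkeeping observation is the right one --- at each stage only a single Cauchy--Schwarz is spent on the variable currently being peeled off, the powers of the already-frozen variables being absorbed into a constant, so the relevant majorant in the $j$-th variable is always $q\mapsto C\,S_j(2q)^{1/2}$, whose Carleman sum is exactly $\sum_q S_j(2q)^{-1/(2q)}$, i.e.\ the hypothesis of the lemma, while log-convexity of $q\mapsto S_j(2q)$ (Cauchy--Schwarz again) lets Rudin's form of the Denjoy--Carleman theorem apply cleanly. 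One small simplification: you do not need the equivalence asserted in your observation (i); the only direction used, that the even-index Carleman condition implies the full one, is trivial because the even-index terms form a subseries of the full series of nonnegative terms.
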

In the following the Fourier transform of $f\in L^1(\R^n)$ is defined by the usual formula
\be
\mathcal F{f}(\la)=\int_{\R^n}f(x)e^{-2\pi i\la \cdot x }dx.\nonumber
\ee
\begin{proof}[Proof of Theorem \ref{thm-cher-Rn}]
Using translation invariance of $\Delta_{\R^n}$, we may assume without loss of generality that $f\in C^\infty(\R^n)$, satisfying (\ref{Carl-cond-Rn}), vanishes along with $\partial^\alpha f$ at the origin for all $\alpha\in \left(\N\cup\{0\}\right)^n$. Using $\mathcal F{f}$ we define a measure $\mu$ on $\R^n$ by
\bes
\mu(E)=\int_E|\mathcal F{f}(\la)|~d\la,
\ees
for all Borel subsets $E$ of $\R^n$. We show that $\mu$ is a finite measure and the polynomials in $n$-variables are contained in $L^1(\R^n, \mu)$. We first work out the case $p\in (1, 2]$. Using the hypothesis $\Delta_{\R^n}^mf\in L^p(\R^n)$, for all $m\in \N\cup \{0\}$, and applying H\"older's and Hausdorff-Young inequalities we get that for $k\in \N\cup \{0\}$
\beas
\int_{\R^n} |\lambda|^k |\mathcal Ff(\lambda)|~d\lambda &<& \int_{\R^n}\left(1+|\lambda|^2\right)^{k+n} |\mathcal Ff(\lambda)|\left(1+|\lambda|^2\right)^{-n}~d\lambda\\
&\leq& \left(\int_{\R^n}\left(1+|\lambda|^2\right)^{(k+n)p^\prime} |\mathcal Ff(\lambda)|^{p^\prime}~d\lambda\right)^{1/{p^\prime}}~\left(\int_{\R^n} \left(1+|\lambda|^2\right)^{-np}~d\lambda\right)^{1/{p}}\\
&\leq & \|\left(1+\Delta_{\R^n}\right)^{k+n}f\|_{p} \left(\int_{\R^n}\left(1+|\lambda|^2\right)^{-np}~d\lambda\right)^{1/{p}}<\infty.
\eeas
For the case $p=1$, we have
\beas
\int_{\R^n} |\lambda|^k |\mathcal Ff(\lambda)|~d\lambda &\leq& \sup_{\lambda\in \R^n} \left\{\left(1+|\lambda|^2\right)^{(k+n)} |\mathcal Ff(\lambda)| \right\}~\int_{\R^n} \left(1+|\lambda|^2\right)^{-n}~d\lambda\\
&\leq & \|\left(1+\Delta_{\R^n}\right)^{k+n}f\|_{1} \int_{\R^n}\left(1+|\lambda|^2\right)^{-n}~d\lambda <\infty.
\eeas
Hence, for $j\in\{1, \cdots, n\}, m\in \N$, the  moment sequence $S_j(m)$ described in Lemma \ref{lempolydense} are well defined and $\mathcal Ff\in L^1 (\R^n)$. It also follows that the polynomials in $n$-variables are contained in $L^1(\R^n, \mu)$. Using the condition (\ref{Carl-cond-Rn}) we now show that the moment sequence $S_j(2m)$ satisfies the Carleman's condition (\ref{carlcond}) for each $j\in \{1, \cdots, n\}$. For $m\in \N$, we have
\bea \label{Sj-est-0}
S_j(2m) &\leq& \int_{\{\la\in \R^n:|\la|<1\}}|\la|^{2m}~|\mathcal F f(\la)|~~d\la + \int_{\{\la\in \R^n:|\la|\geq 1\}}|\la|^{2m}~|\mathcal F f(\la)|~~d\la.
\eea
If the support of $\mathcal F f$ is contained in $B(0, 1)$, then the second integral in the right-hand side is zero. Hence,
\bes
S_j(2m)\leq \int_{\{\la\in \R^n:|\la|<1\}}~|\mathcal F f(\la)|~~d\la=\|\mathcal Ff\|_{1}.
\ees
Therefore, in this case $S_j(2m)$ satisfies the Carleman's condition (\ref{carlcond}).  We now consider the case when the support of $\mathcal F f$ is not contained in $B(0, 1)$. Once again we assume that $p\in (1, 2]$. Using H\"older's inequality and Hausdorff-Young inequality it follows from (\ref{Sj-est-0}) that
\beas
S_j(2m) &\leq& |B(0, 1)|^{1/p} \left(\int_{\{\la\in \R^n:|\lambda|<1\}}|\la|^{2mp^\prime}~|\mathcal F f(\la)|^{p^\prime}~d\la\right)^{1/p^\prime}\\
&& + \left(\int_{\{\la\in \R^n:|\la|\geq 1\}}|\la|^{2(m+n)p^\prime}~|\mathcal F f(\la)|^{p^\prime}~d\la \right)^{\frac{1}{p^\prime}}~ \left(\int_{\{\la\in \R^n:|\la\geq 1|\}} \frac{1}{|\la|^{2np}}~d\la\right)^{\frac{1}{p}}\\
&\leq& |B(0, 1)|^{1/p} \|f\|_p + A_{n,p} \|\Delta_{\R^n}^{m+n} f\|_p,
\eeas
where the constant $A_{n, p}$ depends only on the dimension $n$ and $p$. We observe that $\|\Delta_{\R^n}^{m+n}f\|_p$ is bounded below by a constant $B_{n, p}=\|\chi_{|\la |\geq 1}\mathcal F f \|_p$, which is independent of $m$. Since the support of $\mathcal Ff$ is not contained in $B(0, 1)$, it follows that $B_{n, p}>0$. Therefore, we have from the inequality above that there exists a positive constant $C_{n, p}$ independent of $m$ such that
\bes
S_j(2m) \leq C_{n, p} \|\Delta_{\R^n}^{m+n} f\|_p,\:\:\:\:\: C_{n, p}=|B(0,1)|^{1/p}\|f\|_pB_{n,p}^{-1}+ A_{n, p}.
\ees
Consequently
\bes
|S_j(2m)|^{-\frac{1}{2m}} \geq C_{n,p}^{-\frac{1}{2m}}\|\Delta_{\R^n}^{m+n}f\|_p^{-\frac{1}{2m}} = C_{n, p}^{-\frac{1}{2m}} ~ \left(\|\Delta_{\R^n}^{m+n}f\|_p^{-\frac{1}{2(m+n)}}\right)^{\left(1+\frac{n}{m}\right)}.
\ees
Similar computation shows that the above estimate is also valid for $p=1$.
We now use the following fact \cite[Lemma 3.3]{BPR}: let $\{a_m\}$ be a sequence of positive numbers such that the series $\sum_{m\in \N}a_m$ diverges. Then for any given $n\in\N$, the series $\sum_{m\in \N} a_m^{1+\frac{n}{m}}$ diverges. This fact together with the hypothesis (\ref{Carl-cond-Rn}) implies that
\bes
\sum_{m=1}^{\infty} S_j(2m)^{-\frac{1}{2m}}=\infty.
\ees
Lemma \ref{lempolydense} now implies that the polynomials form a dense subspace of $L^1(\R^n, \mu)$. Since the Fourier transforms of $\partial^\alpha f$ are integrable for all $\alpha\in \left(\N\cup \{0\}\right)^n$, by the Fourier inversion formula it follows that
\bes
(\partial^\alpha f)(x) = \int_{\R^n} (2\pi i\lambda_1)^{\alpha_1}\cdots (2\pi i\lambda_n)^{\alpha_n} \mathcal F f(\la) ~e^{2\pi i x\cdot \lambda} d\la, \:\:\:\: x\in\R^n,\:\alpha\in \N\cup \{0\}.
\ees
The vanishing of the quantities $\partial^{\alpha}f(0)$, for all $\alpha\in \left(\N\cup\{0\}\right)^n$, now implies that
\be \label{cond-2}
\int_{\R^n} P(\lambda)\mathcal F f(\la) ~ d\la=0,
\ee
for all polynomials $P$. We observe that $\mathcal Ff\in L^{p^\prime}(\R^n)\cap L^1(\R^n)$ and hence is in $L^2(\R^n)$. Therefore, $\overline{\mathcal F f} \in L^1(\R^n, d\mu)$ and we can approximate $\overline{\mathcal F f}$ by polynomials $P$, that is, for any given $\epsilon>0$ there exists a polynomial $P_\epsilon$ such that
\bes
\|\overline{\mathcal F f}- P_\epsilon\|_{L^1(\R^n, d\mu)}< \epsilon.
\ees
It now follows that
\beas
&& \int_{\R^n}|\mathcal F f(\la)|^2~d\la =   \int_{\R^n}\overline{\mathcal F f(\la)}\mathcal F f(\la)~d\la =\left| \int_{\R^n}\left(\overline{\mathcal F f(\la)}-P_\epsilon(\la)+P_\epsilon(\la)\right)~\mathcal F f(\la)~d\la \right|\\
&\leq & \int_{\R^n}|\overline{\mathcal F f(\la)}-P_\epsilon(\la)|~d\mu(\lambda) + \left|\int_{\R^n} \mathcal F{f}(\la)P_\epsilon(\la)d\la \right| < \epsilon,
\eeas
the second integral being zero by (\ref{cond-2}). Consequently, $\mathcal F f$ is zero and hence so is $f$.	This completes the proof
\end{proof}
\begin{rem}
It is not known to us at the moment whether Theorem \ref{thm-cher-Rn} remains true for $p>2$.
\end{rem}

\section{Chernoff's theorem for symmetric spaces of noncompact type}

In this section, we first review briefly  the necessary preliminaries regarding semisimple Lie groups and harmonic analysis on Riemannian symmetric spaces. These are standard
and can be found, for example, in \cite{GV, H, H1, H2}. To make the article self-contained, we shall gather only those results which will be used throughout this paper.

Let $G$ be a semisimple Lie group, connected, noncompact, with finite center, and $K$ be a maximal compact subgroup of $G$. The homogeneous space
$X = G/K$ is a Riemannian symmetric space of noncompact type. Let $\mathfrak g = \mathfrak k \oplus \mathfrak p$ be
the Cartan decomposition of the Lie algebra of $G$. There is a natural identification
between $\mathfrak p$ and the tangent space of $X$ at the origin. The Killing form of $\mathfrak g$ induces
a $K$-invariant inner product on $\mathfrak p$, hence a $G$-invariant Riemannian metric on $X$.

We fix a maximal abelian subspace $\mathfrak a$ in $\mathfrak p$. The rank of $X$ is the dimension $l$ of $\mathfrak a$. We shall identify $\mathfrak a$ endowed with
the inner product induced from $\mathfrak p$ with $\R^l$ and let $\mathfrak a^\ast$ be the real dual of $\mathfrak a$.  Let $\Sigma \subset \mathfrak a^\ast$ be the root system of $(\mathfrak g, \mathfrak a)$. Let $M'$ and $M$ be the normalizer and
centralizer of $\mathfrak a$ in $K$ respectively. Then $M$ is a normal subgroup of $M'$ and normalizes $N$.
The quotient group $W = M'/M$ is a finite group, called the Weyl group of the pair $(\mathfrak g, \mathfrak k)$.
$W$ acts on $\mathfrak a$ by the adjoint action. Once a positive Weyl chamber $\mathfrak a^+\subset \mathfrak a$ has been selected, let $\Sigma^+$ denote the corresponding set of positive
roots. Let $n$ be the dimension of $X$, that is, $n=l+ \sum_{\alpha\in \Sigma^+} m_\alpha$, where $m_\alpha$ is the dimension of the positive root subspace $\mathfrak g_\alpha$. Let $\rho\in \mathfrak a^\ast$ denote the half sum of all positive roots counted with their multiplicities $m_\alpha$:
\bes
\rho= \frac{1}{2} \sum_{\alpha\in \Sigma^+} m_\alpha \alpha.
\ees
We extend the inner product on $\mathfrak{a}$ induced by $B$ to $\mathfrak{a}^*$ by duality.
The elements of the Weyl group $W$ acts on $\mathfrak a^*$ by the formula $sY_{\la}=Y_{s\la}$, for $s\in W, \la\in\mathfrak a^*$. Let $\mathfrak{a}_\C^*$ denote the complexification of $\mathfrak{a}^*$, that is, the set of all complex-valued real linear functionals on $\mathfrak{a}$. Let $\mathfrak n$ be the nilpotent Lie subalgebra of $\mathfrak g$ associated to $\Sigma^+$ and let $N = \exp \mathfrak n$
be the corresponding Lie subgroup of $G$. We have the decompositions
\beas
&& G= N(\exp \mathfrak a)K, \:\:\:\:\:\: (\textit{Iwasawa})\\
&& G=K(\exp{\overline{\mathfrak a^+}}) K, \:\:\:\: (\textit{Cartan})
\eeas

A function on $G$ is called $K$-biinvariant if 
\be
f(k_1 g k_2)=f(g),\:\:\:\:\text{for all $g\in G$, $k_1, k_2\in K$}.\nonumber
\ee
Using the polar decomposition of $G$ we may view an integrable or a continuous $K$-biinvariant function $f$ on $G$ as a function on $A_+$, or by using the inverse exponential map we may also view $f$ as a function on $\mathfrak{a}$ solely determined by its values on $\mathfrak{a}_+$. Henceforth, we shall denote the set of $K$-biinvariant functions in $L^p(G)$ by $L^p(G//K)$.
If $f\in L^1(G//K)$ then the spherical Fourier transform $\widehat{f}$ is defined by
\be \label{hlsphreln}
\widehat{f}(\la )= \int_Gf(g)\phi_{-\la}(g)~dg,
\ee
where
\be \label{philambda}
\phi_\la(g)
= \int_K e^{-(i\la+ \rho) \big(H(g^{-1}k)\big)}~dk,\:\:\:\:\:\:\la \in \mathfrak{a}_\C^*,
\ee
is Harish Chandra's elementary spherical function.
We now list down some well-known properties of the elementary spherical functions which are important for us (\cite{GV}, Prop 3.1.4 and Chapter 4, \S 4.6; \cite{H1}, Lemma 1.18, P. 221).
\begin{lem} \label{lem-phi}
\begin{enumerate}
\item[(1)] $\phi_\la(g)$ is $K$-biinvariant in $g\in G$ and $W$-invariant in $\la\in \mathfrak{a}_\C^*$.
\item[(2)] $\phi_\la(g)$ is $C^\infty$ in $g\in G$ and holomorphic in $\la\in \mathfrak{a}_\C^*$.
\item[(3)] For all $\la\in \overline{\mathfrak{a}^\ast_+}$ and $g\in G$ we have $ |\phi_\la(g)| \leq  \phi_0(g)\leq 1$.
\item[(4)] For $\la\in \mathfrak{a}^*$, the function $\phi_\lambda$ satisfies
$\Delta(\phi_{\la})=-(|\la|^2+|\rho|^2)\phi_{\la}$.
\end{enumerate}
\end{lem}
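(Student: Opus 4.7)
The plan is to verify each of the four properties using the integral formula (\ref{philambda}) for $\phi_\lambda$ together with standard facts about the Iwasawa decomposition $g = n(g)\exp(H(g))k(g)$.

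For (1), right $K$-invariance of $\phi_\lambda$ in $g$ is immediate from the definition, while left $K$-invariance follows by substituting $k \mapsto k_0 k$ inside the integral and using bi-invariance of Haar measure on $K$. The $W$-invariance in $\lambda$ is more subtle: the cleanest approach is to recognize $\phi_\lambda$ as the spherical matrix coefficient of the principal series representation $\pi_\lambda$ and invoke the standard intertwining operators $\pi_\lambda \simeq \pi_{s\lambda}$, $s \in W$, which preserve the one-dimensional space of $K$-fixed vectors. Item (2) follows by differentiation under the integral sign; the integrand is smooth in $g$ and entire in $\lambda$, with uniform bounds on compact subsets of $G \times \mathfrak{a}_\C^*$ coming from compactness of $K$, which justifies the interchange.

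For (3), when $\lambda \in \overline{\mathfrak{a}^*_+}$ is real, $|e^{-(i\lambda+\rho)(H(g^{-1}k))}| = e^{-\rho(H(g^{-1}k))}$, so the triangle inequality immediately yields $|\phi_\lambda(g)| \leq \phi_0(g)$. The estimate $\phi_0(g) \leq 1$ then follows from the fact that $\phi_0$ is a positive definite function on $G$ with $\phi_0(e) = 1$, since any such function is bounded in absolute value by its value at the identity; alternatively, one may apply Cauchy--Schwarz to $\phi_0(g) = \int_K e^{-\rho(H(g^{-1}k))}\,dk$ against the normalization $\int_K e^{-2\rho(H(g^{-1}k))}\,dk = 1$ (the Poisson integral identity).

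For (4), the key calculation is that each integrand $g \mapsto e^{-(i\lambda+\rho)(H(g^{-1}k))}$ is itself an eigenfunction of $\Delta$ with eigenvalue $-(|\lambda|^2+|\rho|^2)$. Since $\Delta$ is left $G$-invariant, and in particular commutes with left translation by $k \in K$, integrating over $K$ preserves the eigenvalue equation and yields the claim for $\phi_\lambda$. The main obstacle is the eigenvalue computation for the plane-wave-like function $g \mapsto e^{(\mu-\rho)(H(g))}$, which amounts to computing the image of $\Delta$ under the Harish-Chandra isomorphism $\mathbf{D}(G/K) \to {\rm S}(\mathfrak{a}_\C)^W$, equivalently its radial part in Iwasawa coordinates; one then substitutes $\mu = -i\lambda$ and tracks the $|\rho|^2$ shift carefully to arrive at the stated eigenvalue.
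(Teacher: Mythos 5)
You should first be aware that the paper contains no proof of Lemma \ref{lem-phi}: it is presented as a list of well-known properties of the elementary spherical functions, with the verification delegated entirely to the cited references (Gangolli--Varadarajan, Prop.\ 3.1.4 and Ch.\ 4, \S 4.6, and Helgason's \emph{Geometric Analysis on Symmetric Spaces}). Your sketch reproduces the standard textbook arguments and is essentially sound: the bi-invariance in (1) and the regularity in (2) do follow from the substitution $k\mapsto k_0k$ and differentiation under the integral sign over the compact group $K$; (3) is correct because for real $\la$ the term $i\la\big(H(g^{-1}k)\big)$ is purely imaginary, and $\phi_0\leq 1$ follows either from positive-definiteness or from Cauchy--Schwarz against the identity $\int_K e^{-2\rho(H(g^{-1}k))}\,dk=\phi_{-i\rho}(g)=\phi_{i\rho}(g)=1$ (note this route already uses the $W$-invariance of part (1)); and (4) follows by showing each horospherical plane wave $g\mapsto e^{-(i\la+\rho)(H(g^{-1}k))}$ is a $\Delta$-eigenfunction and commuting $\Delta$ with the $K$-integral. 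Two places where your outline conceals the actual work: the $W$-invariance in $\la$ requires either the meromorphic continuation and correct normalization of the intertwining operators $\pi_\la\simeq\pi_{s\la}$, or, more elementarily, the uniqueness of the normalized $K$-biinvariant joint eigenfunction of ${\bf D}(G/K)$ with eigenvalue system $\Gamma(D)(i\la)$ combined with the $W$-invariance of $\Gamma(D)$; and the eigenvalue in (4) rests on the computation $\Gamma(\Delta)=L_A-|\rho|^2$ (with $L_A$ the Euclidean Laplacian on $A$), which you correctly identify as the crux but do not carry out. Since the paper itself offers only citations here, there is no authorial proof to compare against; your argument is consistent with the sources the authors invoke.
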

It follows from $W$-invariance of $\phi_{\la}$, that $\widehat{f}$ is also $W$-invariant. For $K$-biinvariant $L^p$ functions on $G$ the following Fourier inversion formula is well-known (\cite{ST}, Theorem 3.3 and \cite{NPP}, Theorem 5.4): if $f \in L^p(G//K)$, $1\leq p\leq 2$ with $\hat{f}\in L^1(\mathfrak{a}^* , |{\bf c}(\la)|^{-2}~d\la )$ then for almost every $g\in G$,
\be
f(g)=|W|^{-1}\int_{\frak{a}^*}\widehat{f}(\la )\phi_{-\la}(g)|{\bf c}(\la)|^{-2}~d\la.\label{FI}
\ee
Here ${\bf c}(\la)$ denotes Harish Chandra's ${\bf c}$-function and $|W|$ is the number of elements in the Weyl group. Moreover, $f \mapsto \widehat{f}$ is an isometry of $L^2(G//K)$ onto $L^2(\mathfrak{a}^\ast, |{\bf c}(\la)|^{-2}~d\la)^W$, the subspace of $W$-invariant functions in $L^2(\mathfrak{a}^\ast, |{\bf c}(\la)|^{-2}~d\la)$ \cite[Theorem 1.5]{H1}. It is known that \cite[Ch IV, Prop. 7.2]{H2} , there exists a positive number $C$ such that
\be \label{est-c}
|{\bf c}(\lambda)|^{-2}\leq C(1+|\lambda|)^{\dim \mathfrak n},\:\: \textit{ for } \lambda\in \mathfrak a_+^\ast.
\ee
The standard argument using interpolation produces the following Hausdorff-Young inequality: for $p\in [1,2]$ there exists a positive constant $C_p$ such that for all $f\in L^p(G//K)$
\be
\|\widehat{f}\|_{L^{p'}(\mathfrak{a}^\ast,~ |{\bf c}(\la)|^{-2}~d\la)}\leq C_p \|f\|_{L^p(G)}.\nonumber
\ee

Let ${\bf D}(G/K)$ denote the algebra of differential operators on $G/K$ which are invariant under the action of $G$. We also consider the algebra ${\bf D}(A)$ of differential operators on $A$ which are invariant under all translations ($A$ is abelian, thus ${\bf D}(A)$
contains the differential operators on $A$ with constant coefficients). Let ${\bf D}_W(A)\subset {\bf D}(A)$ denote the subalgebra  of invariant operators under the action of $W$ on $A$.  For $D\in {\bf D}(G/K)$, let $R_N(D)$ denote the radial part of $D$ under the action of $N$ on $G/K$ with transversal
manifold $A\cdot o$.
The Harish-Chandra homomorphism $\Gamma: {\bf D}(G/K)\rightarrow {\bf D}_W(A)$ is defined by
\bes
\Gamma(D)=e^{-\rho} R_N(D)\circ e^{\rho}.
\ees Let $\mathcal S(\mathfrak a)$ be the symmetric algebra over $\mathfrak a$ which is defined as the algebra of complex-valued polynomials funcions on the dual space $\mathfrak a^\ast$. Let $\mathcal S(\mathfrak a)^W$ be the
subalgebra of $W$-invariant elements in $\mathcal S(\mathfrak a)$. We identify ${\bf D}_W(A)$  with $\mathcal S(\mathfrak a)^W$ by \cite[Theorem 4.3, p.280]{H2}. Then we have the following fact \cite[Theorem 5.18, p.306]{H2}.
\begin{thm} \label{H-C-X}
The Harish-Chandra homomorphism $\Gamma$ is an isomorphism from ${\bf D}(G/K)$ onto $\mathcal S(\mathfrak a)^W$.
\end{thm}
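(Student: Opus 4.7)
The plan is to realize $\Gamma$ as the composition of two standard algebra isomorphisms and to verify that the conjugation by $e^{\rho}$ in its definition is precisely what makes the radial-part construction land in $\mathcal{S}(\mathfrak{a})^W$. I would carry out the argument in three stages.

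First I would identify ${\bf D}(G/K)$ with $\mathcal{S}(\mathfrak{p})^K$, the algebra of $K$-invariant complex-valued polynomials on $\mathfrak{p}^{\ast}$. Using the Cartan decomposition $\mathfrak{g} = \mathfrak{k} \oplus \mathfrak{p}$ and the symmetrization map $\mu : \mathcal{S}(\mathfrak{p}) \to U(\mathfrak{g}_{\C})$, one composes with the left-invariant action of $U(\mathfrak{g}_{\C})$ on $C^{\infty}(G)$. An element $\mu(P)$ descends to a well-defined operator on $C^{\infty}(G/K)$ exactly when $P$ is $K$-invariant modulo the left ideal generated by $\mathfrak{k}$, and a Poincar\'e--Birkhoff--Witt argument then shows that this descent map gives a linear isomorphism $\mathcal{S}(\mathfrak{p})^K \cong {\bf D}(G/K)$.

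Second I would apply the Chevalley restriction theorem: restriction from $\mathfrak{p}$ to $\mathfrak{a}$ is an algebra isomorphism $\mathcal{S}(\mathfrak{p})^K \cong \mathcal{S}(\mathfrak{a})^W$. Injectivity uses $K \cdot \mathfrak{a} = \mathfrak{p}$; well-definedness of restriction uses that $\mathfrak{a}$-points are $K$-conjugate iff they are $W$-conjugate; surjectivity uses a $W$-averaging argument to lift a $W$-invariant polynomial on $\mathfrak{a}$ to a $K$-invariant polynomial on $\mathfrak{p}$. Composing with the isomorphism of Stage 1 yields a candidate algebra map ${\bf D}(G/K) \to \mathcal{S}(\mathfrak{a})^W$.

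Third I would show that $\Gamma$ agrees with this composite. Using the Iwasawa decomposition $G = N(\exp \mathfrak{a})K$, the radial part $R_N(D)$ is in general a differential operator on $A$ whose coefficients are polynomial combinations in $e^{-\rho}$ and its derivatives along the root directions. The conjugation $D \mapsto e^{-\rho} R_N(D) \circ e^{\rho}$ is exactly the shift that cancels these $e^{\pm \rho}$ factors, producing a constant-coefficient, $W$-invariant operator on $A$ whose polynomial symbol equals the Chevalley-restricted image of $D$ from Stage 1. For an independent verification of injectivity, I would use that each $D \in {\bf D}(G/K)$ acts on $\phi_{\lambda}$ as multiplication by a polynomial in $\lambda$ determined by $\Gamma(D)$, the base case $D = \Delta$ being Lemma \ref{lem-phi}(4); if $\Gamma(D) = 0$, then $D$ annihilates every spherical function, which by the inversion formula (\ref{FI}) forces $D = 0$ on $K$-biinvariant functions, and hence on all of $C^{\infty}(G/K)$ by $G$-invariance.

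The main obstacle is the third stage: explicitly computing $R_N(D)$ and verifying that the $e^{\rho}$-twist produces genuinely translation-invariant operators on $A$ rather than operators with $\mathfrak{a}$-dependent coefficients. This is the technical core of Harish-Chandra's original argument. The Chevalley restriction theorem and the identification of ${\bf D}(G/K)$ with $\mathcal{S}(\mathfrak{p})^K$, while substantial, are purely algebraic and classical; once the radial-part formula is in place, matching the leading symbols of $\Gamma(D)$ with the restriction of its Chevalley pre-image will complete the proof that $\Gamma$ is an algebra isomorphism.
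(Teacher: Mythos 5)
The paper does not prove this statement at all: it is quoted directly from Helgason \cite[Theorem 5.18, p.306]{H2}, so there is no in-paper argument to compare against. Your outline is essentially Harish-Chandra's original proof as presented there, and Stages 1 and 2 (the symmetrization identification ${\bf D}(G/K)\cong \mathcal S(\mathfrak p)^K$ and the Chevalley restriction isomorphism $\mathcal S(\mathfrak p)^K\cong\mathcal S(\mathfrak a)^W$) are correctly stated, with one caveat: the symmetrization map of Stage 1 is only a \emph{linear} isomorphism, not an algebra isomorphism, so the composite of Stages 1 and 2 is a priori only a linear bijection; the fact that $\Gamma$ is an algebra homomorphism comes from the radial-part construction itself, not from this composite.

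The genuine gap is in Stage 3. It is false that $\Gamma$ ``agrees with this composite'': only the top-degree symbols agree. Already for $D=\Delta$ one has $\Gamma(\Delta)(\mu)=\langle\mu,\mu\rangle-\langle\rho,\rho\rangle$ (combine Lemma \ref{lem-phi}(4) with Lemma \ref{lem-jt-eigen}), whereas the Chevalley restriction of the symbol of $\Delta$ is $\langle\mu,\mu\rangle$; the $\rho$-shift contributes lower-order terms that the composite does not see. The correct statement is that $\Gamma(D)$ minus the Chevalley restriction of the symbol of $D$ has strictly lower degree, and bijectivity of $\Gamma$ then follows by induction on the degree filtration, using that the associated graded map is the Chevalley isomorphism; without this inductive step your argument establishes neither surjectivity nor, by itself, injectivity. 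Two further assertions are left unproved and are themselves substantive theorems: that $e^{-\rho}R_N(D)\circ e^{\rho}$ has constant coefficients, and that it is $W$-invariant. The latter is usually derived either by rank-one reduction or from the functional equation $\phi_{s\lambda}=\phi_{\lambda}$ together with $D\phi_{\lambda}=\Gamma(D)(i\lambda)\phi_{\lambda}$ and analyticity in $\lambda$; it is not a formal byproduct of ``cancelling the $e^{\pm\rho}$ factors.'' Your independent injectivity argument via spherical functions and the inversion formula (\ref{FI}) is sound, granted Lemma \ref{lem-jt-eigen} (which is itself proved from the definition of $\Gamma$) and the standard reduction from $K$-biinvariant functions to all of $C^{\infty}(G/K)$ using $G$-invariance of $D$.
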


We also need the following characterization of joint eigenfunctions for $D\in {\bf D}(G/K)$ \cite[Ch II, Lemma 5.15]{H2}.
\begin{lem}	\label{lem-jt-eigen}
For each $\la \in \mathfrak a_{\C}^\ast$,  the spherical functions $\phi_\la$ are joint eigenfunctions for all differential operators in ${\bf D}(G/K)$. More precisely, $\phi_\la$ satisfies the differential equation
\bes
D\phi_\la= \Gamma(D)(i\la) \phi_\lambda.
\ees
\end{lem}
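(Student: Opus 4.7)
The plan is to realize $\phi_\la$ as a $K$-average of a single ``basic'' function that is itself a joint eigenfunction of every $D\in {\bf D}(G/K)$ with eigenvalue $\Gamma(D)(i\la)$, and then to commute $D$ with the integration over $K$. Concretely, I would introduce $E_\la(g)=e^{-(i\la+\rho)(H(g^{-1}))}$ for $g\in G$. From the standard invariance properties of the Iwasawa projection $H$ (left $K$-invariant and right $N$-invariant), one checks that $E_\la$ is right $K$-invariant (so it descends to a function on $G/K$) and, crucially, left $N$-invariant on $G$. A change of variable then shows
\[\phi_\la(g)=\int_K e^{-(i\la+\rho)(H(g^{-1}k))}\,dk=\int_K E_\la(k^{-1}g)\,dk,\]
so $\phi_\la$ is the $K$-average of the left translates $L_kE_\la$, where $L_kf(g)=f(k^{-1}g)$.

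The heart of the argument is the intermediate identity $DE_\la=\Gamma(D)(i\la)\,E_\la$ for every $D\in {\bf D}(G/K)$. Because $D$ commutes with left translation by $G$, and in particular by $N$, the function $DE_\la$ inherits the left $N$-invariance of $E_\la$ and is therefore determined, via the Iwasawa decomposition, by its restriction to $A\cdot o$. By the definition of the radial part, $(DE_\la)|_A=R_N(D)(E_\la|_A)$. A direct calculation gives $E_\la|_A(a)=e^{(i\la+\rho)(\log a)}$, and the defining formula $\Gamma(D)=e^{-\rho}R_N(D)\circ e^{\rho}$, rewritten as $R_N(D)=e^{\rho}\Gamma(D)\circ e^{-\rho}$, leads to $R_N(D)(e^{(i\la+\rho)\log a})=e^{\rho\log a}\,\Gamma(D)(e^{i\la\log a})$. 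Under the identification of $\Gamma(D)\in\mathcal{S}(\mathfrak{a})^W$ with a translation-invariant differential operator on $A$, it acts on the character $a\mapsto e^{i\la\log a}$ as the scalar $\Gamma(D)(i\la)$, whence $(DE_\la)|_A=\Gamma(D)(i\la)\,E_\la|_A$, and by $N$-invariance $DE_\la=\Gamma(D)(i\la)\,E_\la$ everywhere on $G$.

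To finish, the left $G$-invariance of $D$ gives $DL_k=L_kD$ for every $k\in K$, and by compactness of $K$ together with the smoothness of the integrand one may interchange $D$ with the $K$-integral to conclude
\[D\phi_\la(g)=\int_K (L_kDE_\la)(g)\,dk=\Gamma(D)(i\la)\int_K (L_kE_\la)(g)\,dk=\Gamma(D)(i\la)\phi_\la(g).\]
The step I expect to be the main obstacle is the radial-part computation: one has to reconcile carefully the defining conjugation formula $R_N(D)=e^{\rho}\Gamma(D)\circ e^{-\rho}$ with the identification of $\mathcal{S}(\mathfrak{a})^W$ as translation-invariant differential operators on $A$ acting on characters by evaluation, while respecting the sign and ordering conventions of the Iwasawa decomposition used in the paper.
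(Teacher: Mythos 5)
The paper offers no proof of this lemma at all: it is quoted as a known fact from \cite[Ch II, Lemma 5.15]{H2} and used as a black box, so there is no in-paper argument to compare yours against. What you have written is, in substance, the standard proof from Helgason's book, and it is correct: you realize $\phi_\la$ as the $K$-average of the horocycle plane wave $E_\la$, use the left $N$-invariance of $E_\la$ together with the defining property of the radial part $R_N(D)$ (valid because $D$ commutes with left translations, so $DE_\la$ is again $N$-invariant and hence determined by its restriction to the transversal $A\cdot o$) to obtain $DE_\la=\Gamma(D)(i\la)E_\la$, and then commute $D$ with the integral over the compact group $K$. The convention issue you flag at the end does resolve in your favour, but it deserves a sentence: although the paper displays the Iwasawa decomposition in the order $G=N(\exp\mathfrak a)K$, the projection $H$ occurring in its formula (\ref{philambda}) for $\phi_\la$ is the one attached to the ordering $G=K(\exp\mathfrak a)N$, i.e.\ left $K$-invariant and right $N$-invariant; this is exactly what makes your $E_\la(g)=e^{-(i\la+\rho)(H(g^{-1}))}$ right $K$-invariant and left $N$-invariant, gives $E_\la|_A(a)=e^{(i\la+\rho)(\log a)}$, and makes $R_N(D)=e^{\rho}\,\Gamma(D)\circ e^{-\rho}$ act on that restriction by the scalar $\Gamma(D)(i\la)$, since a constant-coefficient operator $\partial(p)$ on $A$ acts on the character $a\mapsto e^{\mu(\log a)}$ by $p(\mu)$. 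With that convention pinned down, every step of your argument goes through.
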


\begin{proof}[Proof of Theorem \ref{thm-cher-X}]
Suppose $f\in C^{\infty}(G//K)$ satisfies the hypothesis (\ref{Carl-cond-X}) for $p\in [1, 2]$ and $Df(x_0)=0$, for all $D\in {\bf D}(G/K)$. We define a measure $\mu$ on $\frak a^\ast$ by
\bes
\mu(E)=\int_E|\hat{f}(\la)|~|\phi_\lambda(x_0)|~|{\bf c}(\la )|^{-2}d\la,
\ees
for all Borel subsets $E$ of $\frak a^\ast$. We note that $\phi_{\la}(x_0)$ is nonzero for almost every $\la\in\frak a^*$ because of analyticity of the function $\la\mapsto \phi_{\la}(x_0)$. As in the proof of Theorem \ref{thm-cher-Rn} we first show that $\mu$ is a finite measure. Indeed, for $p\in (1, 2]$, using the fact that $|\phi_\lambda(x_0)|\leq 1$, for all $\lambda \in \mathfrak a^\ast$, H\"older's inequality, the estimate (\ref{est-c}) and Hausdorff-Young's inequality we have for large $r\in \N$
\beas
&& \int_{\mathfrak a^\ast} |\widehat f(\lambda)|~|\phi_\lambda(x_0)|~|{\bf c}(\lambda)|^{-2}~d\lambda \\
&\leq& \left(\int_{\mathfrak{a}^\ast}(|\la|^2+|\rho|^2)^{p^\prime r}~|\widehat f(\la)|^{p^\prime}~|{\bf c}(\la)|^{-2}~d\la \right)^{\frac{1}{p^\prime}}~ \left(\int_{\mathfrak{a}^\ast} \frac{|{\bf c}(\la)|^{-2}}{(|\lambda|^2+|\rho|^2)^{pr}}~d\la\right)^{\frac{1}{p}}\\
&\leq& A_{r, p} \|\Delta^r f\|_p< \infty,
\eeas
where
\bes
A_{r, p}=\left(\int_{\mathfrak{a}^\ast} \frac{|{\bf c}(\la)|^{-2}}{(|\lambda |^2+|\rho|^2)^{pr}}~d\la\right)^{\frac{1}{p}}.
\ees
With obvious modification this is true for $p=1$ also. We now define the moment sequence $S_j(m)$ as in (\ref{defn-Sj}) for $j\in \{1, \cdots, l\}$ by
\be \label{defn-Sj-X}
S_j(m)=\int_{\mathfrak a^\ast} |\la(\xi_j)|^m~d\mu(\la),
\ee
where $\{\xi_1, \cdots, \xi_l\}$ is an orthonormal basis of $\mathfrak a$. We show that the sequence $S_j(2m)$ satisfies the Carleman condition (\ref{carlcond}) for all $j\in\{1, \cdots, l\}$. Indeed, as above we choose $r\in \N$ sufficiently large such that
\bes
S_j(2m)\leq \int_{\mathfrak{a}^\ast}(|\la|^2+|\rho|^2)^m~|\widehat f(\la)|~|\phi_\lambda(x_0)|~|{\bf c}(\la)|^{-2}~d\la\leq A_{r,p} \|\Delta^{m+r} f\|_p.
\ees
Therefore
\bes
|S_j(2m)|^{-\frac{1}{2m}}\geq  A_{r,p}^{-\frac{1}{2m}}\|\Delta^{(m+r)}f\|_p^{-\frac{1}{2m}}= A_{r,p}^{-\frac{1}{2m}} ~ \left(\|\Delta^{m+r}f\|_p^{-\frac{1}{2(m+r)}}\right)^{\left(1+\frac{r}{m}\right)}.
\ees
Since, $\lim_{m\ra\infty} A_{r,p}^{-\frac{1}{2m}}=1$, it follows from \cite[Lemma 3.3]{BPR} and the hypothesis (\ref{Carl-cond-X}) that
\bes
\sum_{m=1}^{\infty} S_j(2m)^{-\frac{1}{2m}}=\infty.
\ees
Lemma \ref{lempolydense} now implies that the polynomials form a dense subspace of $L^1(\mathfrak a^\ast, \mu)$.  Hence, the set of $W$-invariant polynomials are dense in 
\be
L^1(\mathfrak a^\ast, d\mu)^W= \{f\in L^1(\mathfrak a^\ast, d\mu): f \textit{ is $W$-invariant}\}.\nonumber
\ee
Since $\widehat{Df}\in L^1(\mathfrak a^\ast, |{\bf c}(\lambda)|^{-2}~d\lambda)$, for all $D\in {\bf D}(G/K)$, it follows by the Fourier inversion (\ref{FI})  that 
\bes
D f(x) = |W|^{-1} \int_{\mathfrak a^*} D \phi_\lambda(x)~\widehat f(\la) ~ |{\bf c}(\la)|^{-2} d\la, \;\:\:\: x\in G/K.
\ees
Therefore, the hypothesis $D f(x_0)=0$, for all $D\in {\bf D}(G/K)$ implies that
\bes
\int_{\mathfrak a^*} D \phi_\lambda(x_0)~\widehat f(\la) ~ |{\bf c}(\la)|^{-2} d\la=0.
\ees
Consequently, using Theorem \ref{H-C-X}, and Lemma \ref{lem-jt-eigen} it follows that for all $W$-invariant polynomials $P$
\be \label{F-P-zero}
\int_{\mathfrak a^*} P(\lambda)~\phi_\lambda(x_0)~\widehat f(\la) ~|{\bf c}(\la)|^{-2} d\la=0.
\ee
We observe that $\widehat f\in L^{p^\prime}\cap L^1(\mathfrak a^\ast, |{\bf c}(\lambda)|^{-2}~d\lambda)^W$ and hence is in $L^2(\mathfrak a^\ast, |{\bf c}(\lambda)|^{-2}~d\lambda)^W$.
Since the function $\lambda \mapsto \phi_\lambda(x_0)$ is bounded and $W$-invariant, it follows that the function
\be
F(\lambda)=\overline{\widehat f(\lambda)}~\overline{\phi_{\lambda}(x_0)} \in L^1(\mathfrak a^\ast, d\mu)^W.\nonumber
\ee
Hence, we can approximate $F$ by $W$-invariant polynomials, that is, given any positive number $\epsilon$ there exists a $W$-invariant polynomial $P_\epsilon$ such that
\bes
\|F- P_\epsilon\|_{L^1(\mathfrak a^\ast, d\mu)}< \epsilon.
\ees
Therefore, using (\ref{F-P-zero}) and the above inequality we get that
\beas
&& \int_{\mathfrak a^\ast} |F(\lambda)|^2~|{\bf c}(\la)|^{-2}d\la\\
&=& \left| \int_{\mathfrak a^\ast}\left(F(\lambda)-P_\epsilon(\la)+P_\epsilon(\la)\right)~\widehat f(\la)~\phi_\lambda(x_0)~|{\bf c}(\la)|^{-2}d\la \right|\\
&\leq & \int_{\mathfrak a^\ast}|F(\lambda)-P_\epsilon(\la)|~d\mu(\lambda) + \left|\int_{\mathfrak a^\ast} P_\epsilon(\la)~\widehat{f}(\la)~\phi_\lambda(x_0)~|{\bf c}(\la)|^{-2}d\la \right| < \epsilon,
\eeas
the second integral being zero. It follows that $F$ is the zero function. Since $\lambda\mapsto \phi_\lambda(x_0)$ is real analytic, we conclude that $\widehat f$ vanishes almost everywhere and hence so does $f$.
\end{proof}

The above proof suggests that we can improve Theorem \ref{thm-cher-X-weaker} by replacing the Carleman condition (\ref{Carl-cond-X-weak}) by (\ref{Carl-cond-X}) for any $p\in [1, 2]$.
\begin{cor} \label{Cor-X}
Let $p\in [1, 2]$. Suppose $f\in C^\infty(G/K)$ is such that $\Delta^m f\in L^p(G/K)$, for all $m\in \N\cup \{0\}$ and the sequence $\|\Delta^m f\|_p$ satisfies the Carleman condition (\ref{Carl-cond-X}).
If $f$ vanishes on a nonempty open set in $G/K$ then $f$ vanishes identically.
\end{cor}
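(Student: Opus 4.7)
The plan is to reduce Corollary \ref{Cor-X} to Theorem \ref{thm-cher-X} by convolving suitable left translates of $f$ with a $K$-biinvariant bump function of small support and then passing to the approximate-identity limit. Given $f\in C^\infty(G/K)$ vanishing on a nonempty open set $U\subset G/K$ and satisfying the hypotheses, for each $h\in G$ the left translate $f_h(g):=f(hg)$ again satisfies them: since $\Delta$ is $G$-invariant and the Riemannian measure on $G/K$ is left $G$-invariant, $\|\Delta^m f_h\|_p=\|\Delta^m f\|_p$, so $f_h$ satisfies (\ref{Carl-cond-X}); moreover $f_h$ vanishes on the nonempty open set $h^{-1}U$.

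Fix a $K$-biinvariant approximate identity $\{\phi_\epsilon\}_{\epsilon>0}\subset C_c^\infty(K\backslash G/K)$ with $\int_G \phi_\epsilon\,dg=1$ and supports shrinking to $K$. Set $F^\epsilon_h:=\phi_\epsilon * f_h$. Then $F^\epsilon_h$ is $K$-biinvariant (left $K$-invariance is inherited from $\phi_\epsilon$, right $K$-invariance from $f$); the identity $\Delta^m F^\epsilon_h=\phi_\epsilon * \Delta^m f_h$ and Young's inequality give $\|\Delta^m F^\epsilon_h\|_p\leq \|\phi_\epsilon\|_1\|\Delta^m f\|_p=\|\Delta^m f\|_p$, so $F^\epsilon_h$ inherits (\ref{Carl-cond-X}). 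Choosing $\epsilon$ smaller than an $h$-independent threshold $\epsilon_0$ --- an inner radius of $U$, which is meaningful since $G$ acts on $G/K$ by Riemannian isometries --- guarantees that $F^\epsilon_h$ vanishes on a nonempty open subset of $h^{-1}U$. In particular, for any $x_0$ in this subset, $DF^\epsilon_h(x_0)=0$ for all $D\in {\bf D}(G/K)$. Theorem \ref{thm-cher-X} applied to $F^\epsilon_h\in C^\infty(G//K)$ then yields $F^\epsilon_h\equiv 0$ for every $h\in G$ and every $\epsilon\in(0,\epsilon_0)$.

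Evaluating at $g=e$ and letting $\epsilon\to 0$: as $\phi_\epsilon$ converges distributionally to the normalized Haar measure on $K$, a standard approximate-identity computation yields $F^\epsilon_h(e)\to \int_K f(hk)\,dk = f(hK)$, the last equality using right $K$-invariance of $f$. Hence $f(hK)=0$ for every $h\in G$, i.e., $f\equiv 0$. The step I expect to require the most care is the uniform-in-$h$ choice of $\epsilon_0$: it rests on the $G$-invariance of the Riemannian distance on $G/K$, guaranteeing that every translate $h^{-1}U$ contains a metric ball of the same radius as the largest ball contained in $U$, which in turn makes the shrunken open set nonempty uniformly in $h$.
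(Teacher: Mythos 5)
Your overall strategy --- radialize suitable translates of $f$ and feed the resulting $K$-biinvariant functions into Theorem \ref{thm-cher-X} --- is the same as the paper's (whose proof simply cites Step 1 of the proof of Theorem 1.3 in \cite{BPR}), but there is a genuine gap at the step where you claim that $F^\epsilon_h=\phi_\epsilon*f_h$ vanishes on a nonempty open subset of $h^{-1}U$ for \emph{every} $h\in G$. With your convention $(\phi_\epsilon*f_h)(g)=\int_G\phi_\epsilon(x)f_h(x^{-1}g)\,dx$ (the one that makes the output left-$K$-invariant), the integrand involves $f_h$ at the points $x^{-1}gK$ with $x$ ranging over a neighbourhood of the double coset $K$; as $x$ runs over $K$ alone, $x^{-1}gK$ already sweeps out the entire $K$-orbit of $gK$, i.e.\ the whole geodesic sphere of radius $d(gK,o)$ centred at the origin $o$. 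So $F^\epsilon_h(g)$ depends on $f_h$ on an $\epsilon$-neighbourhood of that sphere, not on an $\epsilon$-ball around $gK$: left convolution with a $K$-biinvariant kernel is a global radialization (indeed $F^\epsilon_h\to\mathcal S(l_hf)$, the $K$-average of $l_hf$), not a local mollification. The locality you invoke would hold for the right convolution $f_h*\phi_\epsilon$, but that does not produce a $K$-biinvariant function; you cannot have both properties at once. Consequently $F^\epsilon_h$ is guaranteed to vanish on an open set only when the relevant spheres centred at $o$ lie inside $h^{-1}U$, which essentially forces $hK$ to lie (with some margin) in $U$; for such $h$ your limiting argument returns $f(hK)=0$, which is the hypothesis rather than the conclusion.

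What your argument actually yields is $\mathcal S(l_hf)\equiv 0$ for all $h$ with $hK$ in a sub-ball of $U$, and the missing ingredient is precisely the lemma that this already forces $f\equiv 0$: one pairs $\mathcal S(l_hf)$ with the heat kernel $\gamma_t$ to get $(f*\gamma_t)(hK)=0$ for $hK$ in an open set, uses real-analyticity of $f*\gamma_t$ to conclude $f*\gamma_t\equiv 0$, and then inverts the heat semigroup. This is the content of \cite[Lemma 4.6]{BR} (see Lemma \ref{lem-com} for the compact analogue) and of Step 1 of the proof of Theorem 1.3 in \cite{BPR}, which is exactly what the paper's one-line proof of Corollary \ref{Cor-X} appeals to. With that lemma inserted in place of your uniform-in-$h$ vanishing claim, the rest of your reduction (the norm estimates via $G$-invariance of $\Delta$ and Young's inequality, and the application of Theorem \ref{thm-cher-X}) is fine.
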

\begin{proof}
By the Step 1 of the proof of Theorem 1.3 in \cite{BPR} we reduce the problem to the case of $K$-biinvariant function. Hence the result follows from Theorem \ref{thm-cher-X}.
\end{proof}

\begin{rem}\label{counter1}
\begin{enumerate}
\item It is not hard to see that Theorem \ref{thm-cher-X} fails for $p>2$. Precisely, we choose a $\lambda\in\frak a^*\setminus \{0\}$, and let $x_0\in G/K$ be such that $\phi_{\lambda}(x_0)$ is zero. It is well-known that $\phi_{\lambda}\in L^p(G//K)$, for $p>2$, and for all $D\in {\bf D}(G/K)$
\be
D\phi_{\lambda}(x_0)=\Gamma (D)(i\la )\phi_{\la}(x_0)=0.\nonumber
\ee
Finally,
\be
\sum_{m=1}^{\infty}\|\Delta^m \phi_{\lambda}\|_p^{-\frac{1}{2m}}=(|\lambda|^2+|\rho |^2)^{-\frac{1}{2}}\sum_{m=1}^{\infty}\|\phi_{\lambda}\|_p^{-\frac{1}{2m}}=\infty.
\ee
\item Coming back to Euclidean spaces, it has already been mentioned in the introduction that Theorem \ref{thm-cher-Rn} is not true under the weaker assumption $\Delta_{\R^n}^mf(x_0)=0$. But it is not hard to see from the proof above that an analogous argument can be employed to prove the following: suppose $f\in C^{\infty}(\R^n)$ is a radial function with $\Delta_{\R^n}^m f\in L^p(\R^n)$, for some $p\in [1,2]$, and for all $m\in \N\cup \{0\}$. If $f$ satisfies (\ref{Carl-cond-Rn}) and for some $x_0\in\R^n$, $\Delta_{\R^n}^mf(x_0)=0$, for all $m\in\N\cup \{0\}$ then $f$ vanishes identically. The main reason being that any radial polynomial $P(\la)$ must be a polynomial in $|\la|^2$.
    
However, this statement is false if $p>\frac{2n}{n-1}$. Indeed, for $\lambda\in (0,\infty)$ we consider the radial eigenfunctions $\phi_{\la}$ of $\Delta_{\R^n}$ with eigenvalue $-\la^2$, given by
    \be
    \phi_{\la}(x)=\int_{S^{n-1}}e^{i\la x\cdot\omega}d\sigma \omega,\:\:\:\:x\in\R^n,\nonumber
    \ee
    where $\sigma$ is the normalized rotation invariant measure on the unit sphere $S^{n-1}$. It is well-known that $\phi_{\la}$ satisfies the following estimate \cite[P. 348]{S}.
    \be
    |\phi_{\la}(x)|\leq C(1+|x|)^{-\frac{n-1}{2}},\:\:\:\: x\in\R^n,\nonumber
    \ee
    and hence $\phi_{\la}\in L^p(\R^n)$, for $p> \frac{2n}{n-1}$. It is clear that $\phi_{\la}$ satisfies (\ref{Carl-cond-Rn}). If $x_0$ is a zero of $\phi_{\la}$ then it is evident that $\Delta_{\R^n}^m\phi_{\la}(x_0)$ vanishes for all $m\in\N\cup \{0\}$. It is again an open question whether this version is true for the range $2<p\leq \frac{2n}{n-1}$. 
\end{enumerate}
\end{rem}
\section{Chernoff's theorem for compact symmetric spaces}
We now consider compact Riemannian symmetric space $U/K$ , where $U$ is a connected, simply connected, compact, semisimple Lie group which acts isometrically on $U/K$, and $K$ is a closed subgroup with the property that $U_0^\theta \subset K \subset U^\theta$ for an involution $\theta$ of $U$. Here $U^\theta$ denotes the subgroup of $\theta$-fixed points, and $U_0^\theta$ its identity component.

Let $\mathfrak u$ denote the Lie algebra of $U$ and $\mathfrak u= \mathfrak k \oplus \mathfrak q$ be the Cartan decomposition associated with the involution $\theta$. Then $\mathfrak k$ is the Lie algebra of $K$ and $\mathfrak q$ can be identified with the tangent space $T_o(U/K)$ at the origin $o$. Let $\langle \cdot, \cdot \rangle$ be the inner product on $\mathfrak u$ induced from the Killing form. We assume that the Riemannian metric of $U/K$ is normalized such that it agrees with $\langle \cdot, \cdot\rangle$ on the tangent space $\mathfrak q = T_o(U/K)$. The inner product on $u$ determines an inner product on the dual space $\mathfrak u^\ast$ in a canonical fashion. Furthermore, these inner products have complex bilinear extensions to the complexifications $\mathfrak u_\C$ and $\mathfrak u_\C^\ast$. All these bilinear forms are denoted by the same symbol $ \langle \cdot,\cdot \rangle$.

Let $\mathfrak a \subset \mathfrak q$ be a maximal abelian subspace, $\mathfrak a^*$ its dual space, and $\mathfrak a^*_\C$ the complexified dual space. We assume that $\dim \mathfrak a= l$, called real rank of $U/K$. Let $\Sigma$ denote the set of non-zero (restricted) roots of $\mathfrak u$ with respect to $\mathfrak a$. Then $\Sigma\subset \mathfrak a_\C^\ast$ and all the elements of $\Sigma$ are purely imaginary on $\mathfrak a$. The corresponding Weyl group, generated by the reflections in the roots, is denoted $W$. We make fixed choice of a positive system $\Sigma^+$ for $\Sigma$ and define $\rho \in i \mathfrak a^*$ to be the half sum of the roots in $\Sigma^+$, counted with multiplicity.

We now recall the local Fourier theory for $U/K$ based on elementary representation theory. An irreducible unitary representation $\pi$ of $U$ is said to be a $K$-spherical representation if there exists a non-zero $K$-fixed vector $e_\pi$ in the representation space $V_\pi$. The vector $e_\pi$ (if it exists) is unique up to multiplication by scalars. The following parametrization of $K$-spherical irreducible representations of $U$ is due to Helgason (see \cite{H2}, p. 535).
\begin{thm}
The map $\pi \mapsto \mu$, where $\mu\in i\mathfrak a^\ast$ is the highest weight of
$\pi$, induces a bijection between the set of equivalence classes of irreducible
$K$-spherical representations of $U$ and the set
\be \label{defn-Lambda}
\Lambda^{+}(U/K)=\left\{\mu\in i\mathfrak a^\ast: \frac{\langle \mu, \alpha\rangle}{\langle \alpha, \alpha \rangle}\in \mathbb Z^+, \textit{ for all } \alpha\in \Sigma^+\right\}.
\ee
\end{thm}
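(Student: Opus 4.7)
This is the classical Cartan--Helgason parametrization, and my plan is to reproduce the standard argument. First I would extend the real abelian subalgebra $\mathfrak{a}\subset \mathfrak{q}$ to a maximal torus $\mathfrak{t}\subset \mathfrak{u}$ by adjoining a maximal abelian subalgebra $\mathfrak{t}_M$ of the centralizer $\mathfrak{m}:=\mathfrak{z}_{\mathfrak{k}}(\mathfrak{a})$, so that $\mathfrak{t}=\mathfrak{t}_M\oplus \mathfrak{a}$. Choosing a positive system for the full root system $\Delta(\mathfrak{u}_\C,\mathfrak{t}_\C)$ compatible with $\Sigma^+$, the Cartan--Weyl highest weight theorem classifies the irreducible representations $\pi$ of $U$ by their highest weights $\Lambda\in i\mathfrak{t}^*$, which must be dominant and algebraically integral with respect to $\Delta$. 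The task is then to isolate precisely those $\Lambda$ which come from $K$-spherical representations, and to show that restriction to $i\mathfrak{a}^*$ gives the claimed bijection with $\Lambda^+(U/K)$.

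The central equivalence to establish is: the irreducible $\pi_\Lambda$ is $K$-spherical if and only if (i) $\Lambda$ vanishes on $\mathfrak{t}_M$, so that $\Lambda$ descends to an element of $i\mathfrak{a}^*$; and (ii) the restricted-root integrality $\langle \Lambda,\alpha\rangle /\langle \alpha,\alpha\rangle \in \Z^+$ holds for every $\alpha\in \Sigma^+$. For the ``only if'' direction, I would start from a nonzero $K$-fixed unit vector $e_\pi$ and analyze the highest weight vector $v_\Lambda$ in the presence of the involution $\theta$; the vanishing $\Lambda|_{\mathfrak{t}_M}=0$ follows from $\mathfrak{t}_M\subset \mathfrak{k}$ acting trivially on $e_\pi$, and Weyl-group arguments together with the usual raising/lowering considerations of representation theory force the restricted integrality on $\Sigma^+$. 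For the ``if'' direction, given any $\Lambda$ satisfying (i) and (ii), I would construct a $K$-fixed vector by averaging $v_\Lambda$ over $K$ and show that this average is nonzero.

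The main obstacle is precisely the nonvanishing of $\int_K \pi_\Lambda(k) v_\Lambda \, dk$, which is the technical heart of the Cartan--Helgason theorem. A clean route is to compute the $K$-matrix coefficient $k\mapsto \langle \pi_\Lambda(k) v_\Lambda, v_\Lambda\rangle$, reduce via a $KAK$-type decomposition of $U$ to an integral over a maximal torus, and exhibit it as an explicit non-degenerate expression; one can also prove nonvanishing by analytically continuing Harish-Chandra's $\mathbf{c}$-function analogue to the compact dual and using its residues. Once the $K$-fixed vector is produced, uniqueness up to scalars follows by a Schur-type argument on the $K$-fixed subspace. Injectivity of $\pi\mapsto \Lambda$ is part of Cartan--Weyl, while surjectivity onto $\Lambda^+(U/K)$ is precisely the explicit construction, and together these complete the claimed bijection.
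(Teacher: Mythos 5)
The paper does not prove this statement at all: it is quoted as the classical Cartan--Helgason parametrization with a citation to Helgason's \emph{Groups and Geometric Analysis} (p.~535), so there is no internal argument to compare yours against. Your outline is indeed the standard proof from that source --- extend $\mathfrak a\subset\mathfrak q$ to a maximal torus $\mathfrak t=\mathfrak t_M\oplus\mathfrak a$, invoke the Cartan--Weyl highest weight theorem, and characterize the spherical highest weights $\Lambda$ by the two conditions $\Lambda|_{\mathfrak t_M}=0$ and restricted integrality --- so the route is the correct one.

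As a proof, however, the proposal has a genuine gap precisely at the step you yourself label the ``technical heart,'' and that gap infects both directions of the argument. Everything hinges on the nonvanishing $\langle e_\pi, v_\Lambda\rangle\neq 0$, equivalently $\int_K\pi_\Lambda(k)v_\Lambda\,dk\neq 0$. Without it, the ``if'' direction produces no $K$-fixed vector; and in the ``only if'' direction your deduction of $\Lambda|_{\mathfrak t_M}=0$ from ``$\mathfrak t_M\subset\mathfrak k$ acts trivially on $e_\pi$'' does not go through, because the identity $\langle v_\Lambda,e_\pi\rangle=e^{\Lambda(H)}\langle v_\Lambda,e_\pi\rangle$ for $H\in\mathfrak t_M$ only forces $\Lambda|_{\mathfrak t_M}=0$ once one knows the $K$-fixed vector is not orthogonal to the highest weight line. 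You name two plausible routes to the nonvanishing (reduction to a torus integral; residues of a compact analogue of the $\mathbf c$-function) but carry out neither; the standard execution passes to the noncompact dual and exploits the Iwasawa decomposition, either via positivity of the integrand in the spherical-function integral $\int_K e^{\mu(H(gk))}\,dk$ or via density of $\bar N A K$ in the complexification, which forces the component of $e_\pi$ along $v_\Lambda$ to be nonzero. A second, smaller soft spot: the restricted integrality $\langle\Lambda,\alpha\rangle/\langle\alpha,\alpha\rangle\in\Z^+$ for $\alpha\in\Sigma^+$ is asserted via ``the usual raising/lowering considerations,'' but restricted root spaces are in general not one-dimensional and a naive $\mathfrak{sl}_2$ count in the full root system only yields integrality for unrestricted roots; the correct argument uses the rank-one subgroups attached to each restricted root and the realization of the restricted Weyl group $M'/M$ inside $K$. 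Until those two steps are written out, what you have is an accurate plan, consistent with the cited reference, rather than a proof.
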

Here $\mathbb Z^+=\{0, 1, 2, \cdots\}$.
For each $\mu\in \Lambda^+(U/K)$, we fix an irreducible unitary spherical representation $(\pi_\mu, V_\mu )$ of $U$ and a unit $K$-fixed vector $e_\mu \in V_\mu$. The spherical function on $U/K$ associated with $\mu$ is the matrix coefficient
\be \label{psidefn}
\psi_\mu(u) = \langle \pi_\mu(u)e_\mu, e_\mu \rangle, \:\: u\in U,
\ee
viewed as a function on $U/K$. It is $K$-biinvariant, that is, $K$-invariants on both sides as a function on $U$, and it is independent of the choice of the unit vector $e_\mu$. Henceforth, we shall denote the set of $K$-biinvariant functions in $L^1(U/K)$ by $L^1(U//K)$.
The spherical Fourier transform of a continuous $K$-invariant function $f$ on
$X= U/K$ is the function $\widetilde f$ on $\Lambda^+(U/K)$ defined by
\bes
\widetilde f(\mu)= \int_X f(x)~ \overline{\psi_\mu(x)}~dx.
\ees
where $dx$ is the Riemannian measure on $X$, normalized with total measure $1$. The spherical Fourier series for $f$ is
the series given by
\be \label{FI-com}
f(x)=\sum_{\mu\in \Lambda^+(U/K)} d(\mu)~\widetilde f(\mu)~\psi_\mu(x),
\ee
where $d(\mu) = \dim V_\mu$. The Fourier series converges in $L^2$. If $f$ is smooth, this converges absolutely
and uniformly \cite[Theorem 4.3, p.538]{H2}.

We recall the following fact \cite[Lemma 2.5]{BOP}: for $\mu\in \Lambda^+(U/K)$ the spherical function $\psi_\mu$ extends as a holomorphic function to $U_\C$ and  $\psi_\mu\vline_G=\phi_{\mu+\rho}$, where $G/K$ is the noncompact dual of $U/K$ and $\phi_\lambda$'s are the elementary spherical functions on $G/K$. Consequently, using the $W$-invariance of the function $\mu \mapsto \phi_\mu$ it follows that the function $F(\mu)=\tilde f(\mu-\rho)$ is $W$-invariant if $\mu-\rho \in \Lambda^+(U/K)$.

Let ${\bf D}(U/K)$ denote the algebra of $U$-invariant differential operators on $U/K$. We recall that the Harish-Chandra homomorphism maps
$\Gamma: {\bf D}(U/K) \ra \mathcal S(\mathfrak a)^W$ (see \cite[section 5, p. 207]{OS} for the definition). The following result says that this map is surjective.
\begin{lem}[\cite{OS}, Lemma 5.1] \label{lem-Harish-com}
The Harish-Chandra map $\Gamma$ is an isomorphism onto $\mathcal S(\mathfrak a)^W$.
\end{lem}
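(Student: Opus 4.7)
The plan is to reduce Lemma \ref{lem-Harish-com} to its already-available noncompact counterpart, Theorem \ref{H-C-X}, via the duality between $U/K$ and its noncompact dual $G/K$. Here $U$ and $G$ are two real forms of the common complexification $U_{\C}$ corresponding to the Cartan decompositions $\mathfrak{u} = \mathfrak{k} \oplus \mathfrak{q}$ and $\mathfrak{g} = \mathfrak{k} \oplus \mathfrak{p}$ with $\mathfrak{p} = i\mathfrak{q} \subset \mathfrak{u}_{\C}$. Under this identification the real abelian subspace $\mathfrak{a} \subset \mathfrak{q}$ corresponds to $i\mathfrak{a} \subset \mathfrak{p}$, the restricted root systems match, and the Weyl groups of $(U,K)$ and $(G,K)$ are canonically identified. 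The bridge linking $\psi_\mu$ on $U/K$ to $\phi_{\mu+\rho}$ on $G/K$ (recalled just before the lemma) is exactly the analytic manifestation of this duality.

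First, I would construct an algebra isomorphism $\iota: {\bf D}(G/K) \to {\bf D}(U/K)$. Both algebras can be realized as the quotients $U(\mathfrak{g})^K/(U(\mathfrak{g})^K \cap U(\mathfrak{g})\mathfrak{k})$ and $U(\mathfrak{u})^K/(U(\mathfrak{u})^K \cap U(\mathfrak{u})\mathfrak{k})$ respectively, and their complexifications coincide since $U(\mathfrak{g})_{\C} = U(\mathfrak{u})_{\C}$ while $K$ is common. Equivalently, an invariant differential operator on either real form extends uniquely holomorphically to $U_{\C}/K_{\C}$ and restricts to an invariant operator on the other real form, so $\iota$ is an isomorphism of $\C$-algebras.

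Next, I would check that $\iota$ intertwines the two Harish--Chandra homomorphisms once one identifies their common target. The target $\mathcal{S}(\mathfrak{a})^{W}$ for $U/K$ consists of polynomials on $i\mathfrak{a}^{*}$, whereas for $G/K$ it consists of polynomials on $\mathfrak{a}^{*}$; these are naturally identified via $\lambda \leftrightarrow i\lambda$ (this is the same identification that appears implicitly in $\psi_\mu\mid_G = \phi_{\mu+\rho}$). Both $\Gamma$'s are defined by first passing to the radial part along the transversal manifold associated with an Iwasawa-type decomposition and then conjugating by $e^{\rho}$, and each ingredient transfers holomorphically under the duality, giving a commutative square whose horizontal arrows are the two Harish--Chandra maps and whose vertical arrows are isomorphisms. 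Since the bottom arrow is an isomorphism by Theorem \ref{H-C-X}, so is the top arrow, which is the lemma.

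The main obstacle, I expect, is the bookkeeping in the second step: one has to match definitions of the radial parts and the $e^{\rho}$-twist on the two sides of the duality, and verify that the natural identifications $\mathfrak{a} \leftrightarrow i\mathfrak{a}$ and $W_{\text{cpt}} \leftrightarrow W_{\text{ncpt}}$ make the diagram commute on the nose. A self-contained alternative that avoids duality would be to prove injectivity by showing (as a compact analogue of Lemma \ref{lem-jt-eigen}) that $D\psi_\mu = \Gamma(D)(\mu)\psi_\mu$ for all $D \in {\bf D}(U/K)$ and $\mu \in \Lambda^{+}(U/K)$, so that $\Gamma(D)=0$ forces $D\psi_\mu \equiv 0$ for every $\mu$; Peter--Weyl density of $\{\psi_\mu\}$ in $C^{\infty}(U//K)$ and $U$-invariance of $D$ then give $D=0$. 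Surjectivity would follow from the symmetrization map $\mathcal{S}(\mathfrak{q})^{K} \to {\bf D}(U/K)$ combined with Chevalley's restriction theorem $\mathcal{S}(\mathfrak{q})^{K} \cong \mathcal{S}(\mathfrak{a})^{W}$, once one identifies the Chevalley restriction with the algebraic leading term of the Harish--Chandra radial expansion.
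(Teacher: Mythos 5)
The paper offers no proof of this lemma: it is quoted directly from \cite{OS}, Lemma 5.1, so there is no in-paper argument to compare against. Your primary route --- identifying ${\bf D}(U/K)$ with ${\bf D}(G/K)$ of the noncompact Riemannian dual through the common complexification $U(\mathfrak u_\C)=U(\mathfrak g_\C)$, matching the two Harish--Chandra homomorphisms under $\mathfrak a\leftrightarrow i\mathfrak a$ and the identified Weyl groups, and then invoking Theorem \ref{H-C-X} --- is precisely how the cited source (following Helgason) establishes the result, and it is sound; the bookkeeping you flag in the second step is routine and does work out, since the radial parts and the $e^{\rho}$-twist are defined by holomorphic continuation of the same data. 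One caveat on your self-contained alternative: knowing $D\psi_\mu=0$ for every $\mu\in\Lambda^+(U/K)$ together with density of the span of the $\psi_\mu$ only yields that $D$ annihilates $C^\infty(U//K)$, and passing from this to $D=0$ on all of $C^\infty(U/K)$ requires an extra step --- for instance the averaging identity $D\bigl(\mathcal S(l_gf)\bigr)(o)=Df(gK)$, which uses $U$-invariance of $D$ and $k\cdot o=o$, or equivalently Schur's lemma applied to the multiplicity-one spherical isotypic components of $L^2(U/K)$. As written, ``Peter--Weyl density plus $U$-invariance'' elides this point, though it is easily repaired.
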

The spherical function $\psi_\mu$ satisfies the joint eigen equation \cite[Eq. 5.1]{OS}
\be \label{joint-eigen-com}
D\psi_\mu = \Gamma(D)(\mu + \rho)\psi_\mu, \:\: \:\: D \in {\bf D}(U/K).
\ee
In particular, the Laplace-Beltrami operator $\tilde \Delta$ on $U/K$ belongs to ${\bf D}(U/K)$, and we have $\Gamma(\tilde \Delta)(\mu)= \langle \mu, \mu \rangle- \langle \rho, \rho \rangle$. Since $\tilde \Delta$ is self-adjoint it follows from (\ref{joint-eigen-com}) that
\bes
(\tilde \Delta f)^{\widetilde{}}(\mu) =\left(\langle \mu+\rho, \mu+\rho\rangle-\langle \rho, \rho \rangle \right) \widetilde f(\mu), \:\: \textit{ for all } f\in C^\infty(U//K).
\ees

\begin{proof}[Proof of Theorem \ref{thm-cher-com}]
Since $U/K$ is a finite measure space it suffices to work under the assumption (\ref{Carl-cond-com}) for $p=1$. Let $f\in C^\infty(U//K)$ satisfy the hypothesis of Theorem \ref{thm-cher-com}.
We define the measures $\mu_f$ and $\nu_f$ on the Borel subsets of $i\mathfrak a^\ast$ by
\bes
\mu_f(E)=\sum_{\mu-\rho\in E\cap \Lambda^+(U/K)} d(\mu-\rho)~|\widetilde f(\mu-\rho)|, \:\:\:\:
 \nu_f(E)=\sum_{\mu\in E \cap \Lambda^+(U/K)} d(\mu)~\widetilde f(\mu).
\ees
For $j\in \{1, \cdots, l\}$, we now define the moment sequence $S_j(m)$ as in (\ref{defn-Sj-X}) for the measure $\mu_f$. Then
\be \label{Sj-est}
S_j(2m)\leq \int_{i\mathfrak a^\ast} |\langle \lambda, \lambda \rangle|^m~d\mu_f(\lambda)= \sum_{\mu\in\Lambda^+(U/K)}|\langle \mu+\rho, \mu+\rho \rangle|^m d(\mu)~|\widetilde f(\mu)|.
\ee
We claim that there exists $C>0$ such that   for all nonzero $\mu\in \Lambda^+(U/K)$
\be \label{claim}
|\langle \mu+\rho, \mu+\rho \rangle|\leq C|\langle \mu+\rho, \mu+\rho \rangle-\langle \rho, \rho \rangle|.
\ee
Since $\mu$ and $\rho$ are in $i\mathfrak a^\ast$, we write them as $i\mu^\prime$ and $i\rho^\prime$ respectively, for some $\mu^\prime$ and $\rho^\prime$ in $\mathfrak a^\ast$. Let
\be
\eta= \min\{|\langle \mu, \mu \rangle|: \mu\neq 0, \mu\in \Lambda^+(U/K)\}.\nonumber
\ee
From the definition (\ref{defn-Lambda}) of $\Lambda^+(U/K)$ it follows that $\eta>0$. The definition (\ref{defn-Lambda}) also implies that $\langle \mu^\prime, \rho^\prime \rangle \geq 0$.  Therefore , for all nonzero $\mu\in \Lambda^+(U/K)$
\beas
|\langle \mu+\rho, \mu+\rho \rangle| &=& \langle\mu^\prime, \mu^\prime \rangle+2\langle \mu^\prime, \rho^\prime\rangle+ \langle\rho^\prime, \rho^\prime\rangle \\
&\leq& \langle\mu^\prime, \mu^\prime \rangle+2\langle \mu^\prime, \rho^\prime\rangle+ \langle\rho^\prime, \rho^\prime\rangle \eta^{-1} \langle\mu^\prime, \mu^\prime \rangle\\
&\leq & \left(1+ \langle\rho^\prime, \rho^\prime \rangle \eta^{-1}\right)\left(\langle\mu^\prime, \mu^\prime \rangle+2\langle \mu^\prime, \rho^\prime\rangle \right).
\eeas
This proves the claim (\ref{claim}) with $C=\left(1+ \langle\rho^\prime, \rho^\prime \rangle \eta^{-1}\right)$. We note that $d(\lambda)$ is of polynomial growth \cite[Theorem 9.10, p. 321]{H1}. We choose $r\in N$ sufficiently large. Then using (\ref{claim}) it follows from (\ref{Sj-est}) that
\beas
S_j(2m)&\leq & C^m\sum_{\mu\in\Lambda^+(U/K), \mu\neq 0 }~|\langle \mu+\rho, \mu+\rho \rangle-\langle \rho, \rho \rangle|^m~ d(\mu)~|\widetilde f(\mu)|\\
&\leq& C^m \sum_{\mu\in\Lambda^+(U/K), \mu\neq 0} |(\tilde\Delta f)^{\widetilde{}} (\mu)|^{m+r}~ |\langle \mu+\rho, \mu+\rho \rangle-\langle \rho, \rho \rangle|^{-r}~d(\mu)\\
&=& C^m  \sup_{\mu\in\Lambda^+(U/K)} \left\{|(\tilde\Delta f)^{\widetilde{}} (\mu)|^{m+r}\right\}~ \sum_{\mu\in\Lambda^+(U/K), \mu \neq 0}|\langle \mu+\rho, \mu+\rho \rangle-\langle \rho, \rho \rangle|^{-r}~d(\mu)\\
&\leq& C^m~A_r \|\tilde\Delta^{m+r} f\|_1.
\eeas
Here $A_r$ is the series in the right-hand side of the second last equality above. This is finite since $r$ is sufficiently large and $d(\mu)$ is of polynomial growth.
We now argue as in the proof of Theorem \ref{thm-cher-X} that under the hypothesis (\ref{Carl-cond-com}), the sequence $S_j(2m)$ satisfies the Carleman condition (\ref{carlcond}). Consequently, Lemma \ref{lempolydense} now implies that the polynomials in $i\mathfrak a^\ast$ form a dense subspace of $L^1(\mathfrak ia^\ast, d\mu_f)$.  Hence, the set of $W$-invariant polynomials are dense in $L^{1}(\mathfrak ia^\ast, d\mu_f)^W$. Here $L^{1}(i\mathfrak a^\ast, d\mu_f)^W$ is the set of $W$-invariant functions in $L^1(i\mathfrak a^\ast, d\mu_f)$. By the Fourier inversion formula (\ref{FI-com})  it follows that
\bes
Df(x) =\sum_{\mu\in \Lambda^+(U/K)} d(\mu)~D \psi_\mu(x)~ \widetilde f(\mu) =\int_{i\mathfrak a^\ast} D \psi_\lambda(x)~ d\nu_f(\lambda).
\ees
Therefore, using the hypothesis that $D f(o)=0$, for all $D\in {\bf D}(U/K)$, Lemma \ref{lem-Harish-com} and equation (\ref{joint-eigen-com}) we have that for all $W$-invariant polynomials $P$ on $i\mathfrak a^\ast$
\be\label{eqn-com}
\int_{i\mathfrak a^\ast} P(\la+\rho)~d\nu_f(\la)=0.
\ee
Let $\mu-\rho\in \Lambda^+(U/K)$. Then the function $F(\mu)=\overline{\widetilde f}(\mu-\rho)$ satisfies $F(w\mu)=F(\mu)$, for all $w\in W$. Hence, $F\in L^1(i\mathfrak a^\ast, d\mu_f)^W$. Then   we can approximate $F$ by $W$-invariant polynomials, that is, given $\epsilon>0$ small there exists $P_\epsilon$ such that $
\|F- P_\epsilon\|_{L^1(i\mathfrak a^\ast, d\mu_f)}< \epsilon$.
Therefore, by (\ref{eqn-com}) it follows that
\beas
\sum_{\mu\in \Lambda^+(U/K)} d(\mu)~|\widetilde f(\mu)|^2&=&
 \int_{i\mathfrak a^\ast} F(\lambda+\rho)~d\nu_f(\la)\\
&=& \left| \int_{i\mathfrak a^\ast}\left(F(\lambda+\rho)-P_\epsilon(\lambda+\rho)+P_\epsilon(\lambda+\rho)\right)~d\nu_f(\lambda)\right|\\
&\leq & \int_{i\mathfrak a^\ast}|F(\lambda)-P_\epsilon(\lambda)|~d\mu_f(\lambda) + \left|\int_{i\mathfrak a^\ast} P_\epsilon(\lambda+\rho) d\nu_f(\lambda) \right| < \epsilon.
\eeas
It follows that $\widetilde f$ is zero and hence so is $f$.
\end{proof}

If we assume the vanishing condition of the function on a nonempty open set instead of a single point then Theorem \ref{thm-cher-com} can be extended to smooth functions on $U/K$, which are not necessarily $K$-biinvariant. 
\begin{thm} \label{thm-com-open}
Suppose $f\in C^\infty(U/K)$ satisfies the condition (\ref{Carl-cond-com}), for some $p\in [1,\infty]$. If $f$ vanishes on nonempty open set in $U/K$ then f vanishes identically.
\end{thm}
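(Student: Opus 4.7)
The plan is to reduce to the $K$-biinvariant situation handled by Theorem \ref{thm-cher-com} and then recover $f$ from a whole family of such reductions by Peter--Weyl analysis.

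Without loss of generality, $o\in V$. For each $u\in U$, introduce the $K$-biinvariant ``partial spherical average''
\[
f_u(x):=\int_K f(ukx)\,dk,\qquad x\in U/K.
\]
($K$-biinvariance: right from that of $f$, left from the averaging.) Since $\tilde\Delta$ commutes with left $U$-translations and passes under the integral over $K$, one has $\tilde\Delta^m f_u=(\tilde\Delta^m f)_u$; Minkowski's integral inequality combined with $U$-invariance of the measure on $U/K$ then gives $\|\tilde\Delta^m f_u\|_p\le\|\tilde\Delta^m f\|_p$ for every $m$, so $f_u$ inherits the Carleman condition (\ref{Carl-cond-com}).

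Set $V':=\{u\in U:u\cdot o\in V\}$, an open neighborhood of $e$ in $U$. If $u\in V'$, then since $\Phi(k,x):=ukx$ is continuous on $K\times U/K$ and $\Phi(K\times\{o\})=\{u\cdot o\}\subset V$, compactness of $K$ yields an open neighborhood $W$ of $o$ in $U/K$ with $\Phi(K\times W)\subset V$; hence $f_u$ vanishes on $W$. In particular $Df_u(o)=0$ for all $D\in{\bf D}(U/K)$, and Theorem \ref{thm-cher-com} forces $f_u\equiv 0$ on $U/K$ for every $u\in V'$.

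To recover $f$ itself, decompose it under the left $U$-action by Peter--Weyl: $f=\sum_\pi f_\pi$, where $\pi$ ranges over the $K$-spherical classes in $\widehat U$ and $f_\pi(x)=\langle v_\pi,\pi(x)e_\pi\rangle$ for some $v_\pi\in V_\pi$, with $e_\pi$ a unit $K$-fixed vector. Left translation and $K$-averaging both preserve the $\pi$-isotypic components, so $(f_u)_\pi=(f_\pi)_u$; using the fact that $\int_K\pi(k)\,dk$ is the orthogonal projection $w\mapsto\langle w,e_\pi\rangle e_\pi$ onto $V_\pi^K=\C e_\pi$, a short computation shows that $(f_\pi)_u$ is, up to complex conjugation, the product of $\overline{\psi_\pi(x)}$ with the matrix coefficient $h_\pi(u):=\langle v_\pi,\pi(u)e_\pi\rangle$. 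Orthogonality of distinct $\pi$-isotypic components in $L^2(U/K)$ translates the vanishing of $f_u$ on $V'$ into $h_\pi\equiv 0$ on $V'$ for every $\pi$. But $h_\pi$ is a matrix coefficient of the finite-dimensional representation $\pi$, hence real-analytic on the connected group $U$, so $h_\pi\equiv 0$ on $U$. Since $\{\pi(u)e_\pi:u\in U\}$ spans $V_\pi$ by irreducibility, $v_\pi=0$, and therefore $f_\pi=0$ for every $\pi$. Summing over $\pi$ gives $f\equiv 0$.

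The delicate point is the last step: the reduction to the $K$-biinvariant case is essentially routine, but recovering $f$ from the vanishing of the spherical projections of all its left translates $L_{u^{-1}}f$ rests in an essential way on the real-analyticity of matrix coefficients of finite-dimensional representations together with the connectedness of $U$.
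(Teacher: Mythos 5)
Your proof is correct, and its first half is essentially the paper's reduction: you form the $K$-biinvariant averages $f_u=\mathcal S(l_uf)$ of the left translates, check with Minkowski's integral inequality that each inherits the Carleman condition \eqref{Carl-cond-com}, observe (tube lemma) that $f_u$ vanishes near $o$ when $u\cdot o$ lies in the zero set, and invoke Theorem \ref{thm-cher-com} to get $f_u\equiv 0$ for all such $u$; this is exactly the adaptation of Step 1 of \cite[Theorem 1.3]{BPR} that the paper cites. Where you genuinely diverge is in recovering $f$ from the vanishing of all these averages. The paper isolates that step as Lemma \ref{lem-com} and proves it with the heat kernel: the hypothesis forces $f\ast\gamma_t$ to vanish on a ball, real-analyticity (indeed holomorphy on $U_\C$) of $f\ast\gamma_t$ forces $f\ast\gamma_t\equiv 0$, and the nonvanishing of the Fourier coefficients of $\gamma_t$ then kills $f$. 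You instead decompose $f$ by Peter--Weyl into spherical isotypic pieces $f_\pi(x)=\langle v_\pi,\pi(x)e_\pi\rangle$, compute $(f_\pi)_u=h_\pi(u)\,\psi_\pi$ with $h_\pi(u)=\langle v_\pi,\pi(u)e_\pi\rangle$ via the rank-one projection $\int_K\pi(k)\,dk$, and use orthogonality of isotypic components, real-analyticity of the finite-dimensional matrix coefficient $h_\pi$ on the connected group $U$, and cyclicity of $e_\pi$ under the irreducible action to conclude $v_\pi=0$ for every $\pi$. Both are analytic-continuation arguments at heart; yours replaces heat-kernel mollification by the observation that each isotypic component is already analytic, which makes the recovery step self-contained and purely representation-theoretic, while the paper's version is the compact analogue of \cite[Lemma 4.6]{BR} and applies to arbitrary $f\in L^1(U/K)$ without decomposing it. One minor point worth recording in either version: if $\|\tilde\Delta^m f_u\|_p=0$ for some $m\geq 1$ the Carleman sum is formally undefined, but then $f_u$ is constant on the compact connected space and hence zero, since it vanishes on an open set.
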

As in the proof of Corollary \ref{Cor-X}, we can deduce the proof of the theorem above to the $K$-biinvariant functions. For $f\in L^1(U/K)$, we define the $K$-biinvariant component $\mathcal Sf$ of $f$ by the integral
\be \label{radialization}
\mathcal Sf(x) = \int_Kf(kx)~dk, \:\: x\in U/K,
\ee
and for $g \in U$, we define the left translation operator $l_g$ on $L^1(U/K)$ by 
\bes
l_g f(x)= f(gx), \:\:  x\in U/K.
\ees
For a nonzero integrable function $f$, its $K$-biinvariant component $\mathcal S(f)$ may be zero. However, the following lemma shows that there always exists $g\in U$ such that $\mathcal S(l_gf)$ is nonzero. In the case of noncompact symmetric spaces $G/K$ the proof is given in \cite[Lemma 4.6]{BR}.  Let  ${\mathcal B}(o, r)$ denote the open ball of radius $r$ centered at $o$.
\begin{lem} \label{lem-com}
If $f\in L^1(U/K)$ is nonzero then for every $r$ positive there exists $g\in U$ with $gK\in {\mathcal B}(o, r)$ such that $\mathcal S(l_gf)$ is nonzero.
\end{lem}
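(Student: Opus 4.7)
The plan is to argue by contradiction: assuming $\mathcal{S}(l_g f) = 0$ in $L^1(U//K)$ for every $g \in U$ with $gK \in {\mathcal B}(o, r)$, I would deduce that $f = 0$, contradicting the hypothesis. First I would observe that $\mathcal{S}(l_{gk} f) = \mathcal{S}(l_g f)$ for every $k \in K$, so the assumption concerns a right-$K$-invariant open subset of $U$; and by injectivity of the spherical transform on $L^1(U//K)$ (which follows from the uniform density of $\{\psi_\mu\}$ in $C(U//K)$ via Peter--Weyl), the assumption is equivalent to $\widetilde{\mathcal{S}(l_g f)}(\mu) = 0$ for every $\mu \in \Lambda^{+}(U/K)$ and every such $g$.

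Next I would recognize the scalar $\widetilde{\mathcal{S}(l_g f)}(\mu)$ as a matrix coefficient of $\pi_\mu$. Fubini and the left-$K$-invariance of $\psi_\mu$ give
\[ \widetilde{\mathcal{S}(l_g f)}(\mu) = \int_{U/K} f(gy)\,\overline{\psi_\mu(y)}\,dy, \]
and inserting $\psi_\mu(y) = \langle \pi_\mu(y) e_\mu, e_\mu \rangle$ together with $\pi_\mu(gy) = \pi_\mu(g) \pi_\mu(y)$ recasts this as
\[ \Phi_\mu(g) := \widetilde{\mathcal{S}(l_g f)}(\mu) = \langle \pi_\mu(g) e_\mu,\, w_\mu \rangle, \qquad w_\mu := \int_{U/K} \overline{f(z)}\, \pi_\mu(z) e_\mu\, dz \;\in\; V_\mu, \]
a matrix coefficient of the finite-dimensional irreducible unitary representation $\pi_\mu$. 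The weak integral defining $w_\mu$ is well posed because $V_\mu$ is finite-dimensional and $f \in L^1$.

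The heart of the argument is then real-analyticity: $\Phi_\mu$ is a matrix coefficient of a finite-dimensional representation of the compact connected Lie group $U$, hence real-analytic on $U$, and its vanishing on a nonempty open set forces $\Phi_\mu \equiv 0$ on all of $U$. Since $\pi_\mu$ is irreducible, $\{\pi_\mu(g) e_\mu : g \in U\}$ spans $V_\mu$, so $w_\mu = 0$ for every $\mu \in \Lambda^{+}(U/K)$. Pairing against arbitrary $v \in V_\mu$ gives $\int_{U/K} \overline{f(z)}\,\langle \pi_\mu(z) e_\mu, v \rangle\, dz = 0$, and by Peter--Weyl the span of the right-$K$-invariant matrix coefficients $z \mapsto \langle \pi_\mu(z) e_\mu, v \rangle$ is uniformly dense in $C(U/K)$ and therefore separates $L^1(U/K)$. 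Hence $f = 0$.

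The main obstacle I anticipate is the clean translation between the $L^1(U//K)$-valued statement $\mathcal{S}(l_g f) = 0$ and the scalar identity $\Phi_\mu(g) = 0$, together with the Fubini manipulation that recasts $\widetilde{\mathcal{S}(l_g f)}(\mu)$ as a matrix coefficient of $\pi_\mu$; once this identification is in hand, real-analyticity on the connected compact group $U$ together with Peter--Weyl density carry the rest of the argument automatically. One could alternatively mimic the noncompact proof from \cite{BR} by convolving $f$ with a $K$-biinvariant approximate identity supported in ${\mathcal B}(o, r)$, but the representation-theoretic route above seems the most transparent in the compact setting.
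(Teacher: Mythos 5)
Your proof is correct, but it takes a genuinely different route from the paper's. The paper argues by contradiction and tests the hypothesis against a single well-chosen $K$-biinvariant family, the heat kernels $\gamma_t$: from $\mathcal S(l_gf)=0$ for all $gK\in\mathcal B(o,r)$ it deduces $(f*\gamma_t)(gK)=0$ on that ball, invokes the real-analyticity (indeed holomorphic extendability) of $f*\gamma_t$ to conclude $f*\gamma_t\equiv 0$ for every $t>0$, and then uses the nonvanishing of $\widehat{\gamma_t}(\mu)$ together with Fourier inversion to get $f=0$. You instead test against each spherical function $\psi_\mu$ separately, identify $g\mapsto\widetilde{\mathcal S(l_gf)}(\mu)$ as the matrix coefficient $\langle\pi_\mu(g)e_\mu,w_\mu\rangle$ of the finite-dimensional irreducible $\pi_\mu$, and exploit real-analyticity of matrix coefficients on the connected group $U$, irreducibility (to force $w_\mu=0$), and Peter--Weyl density of the right-$K$-invariant matrix coefficients to conclude $f=0$. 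The two arguments are structurally parallel --- in both, the decisive step is that a real-analytic object produced from the hypothesis vanishes on a nonempty open set and hence everywhere --- but yours replaces the heat kernel (and the facts that $f*\gamma_t$ is real-analytic and $\widehat{\gamma_t}$ never vanishes) by elementary finite-dimensional representation theory, making it self-contained in the compact setting; the paper's version has the virtue of transporting verbatim the noncompact argument of \cite{BR}. Minor points in your write-up: the injectivity of the spherical transform is not actually needed (only the trivial direction $\mathcal S(l_gf)=0\Rightarrow\widetilde{\mathcal S(l_gf)}(\mu)=0$ is used), and you should note explicitly that the set $\{g\in U: gK\in\mathcal B(o,r)\}$ is a nonempty open subset of the connected group $U$, which is what licenses the analytic continuation step.
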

\begin{proof}
 Suppose the result is false. Then there exists a positive number $r$ such that for all $gK\in {\mathcal B}(o, r)$ the function  $\mathcal S(l_gf)$ is zero.  Hence, for all $t$ positive we have
\bes
\int_U \mathcal S(l_gf)(x) ~ \gamma_t(x^{-1}) ~ dx =0.
\ees
Here $\gamma_t$ is the heat kernel on $U/K$ (see \cite[p. 443]{Thangavelu-2007}). This implies that $(f*\gamma_t)(gK)$ is zero for all positive number $t$. 
That is, $f*\gamma_t$ vanishes on the open ball ${\mathcal B}(o, r)$, for all $t$ positive. Since $f\ast \gamma_t$ is real analytic  on $U$ (in fact holomorphic on $U_\C$) (see \cite[p. 443]{Thangavelu-2007}), it follows that $f\ast \gamma_t$ is the zero function for each $t>0$. Since $\widehat{\gamma_t}(\mu)$ is nonzero for all $\mu\in \Lambda^+(U/K)$, by Fourier inversion formula (\ref{FI-com}), $f$ vanishes identically.  
\end{proof}

\begin{proof}[Proof of Theorem \ref{thm-com-open}]
The analogous proof of Step 1 of the proof of \cite[Theorem 1.3]{BPR} to $U/K$ and Lemma \ref{lem-com} reduce the problem to the case of $K$-biinvariant function on $U$ vanishing on an open set around the origin. The result then follows from Theorem \ref{thm-cher-com}.

\end{proof}

\begin{rem}\label{counter2}
\begin{enumerate}
\item As has been mentioned in the introduction that Theorem \ref{thm-cher-com} may fail if the identity coset $o$ is replaced by some other coset $x_0 K$. To see this we choose a nonzero $\mu\in \Lambda^+(U/K)$ and let $x_0 K$ be such that $\psi_{\mu}(x_0)=0$. Since $\psi_{\la}$ is an eigenfunction of $\tilde \Delta$, it follows that $\tilde \Delta^m\psi_{\mu}(x_0)$ is zero for all
$m\in\N\cup\{0\}$.
Moreover, $\psi_{\mu}$ satisfies (\ref{Carl-cond-com}) as
\be
\sum_{m=1}^{\infty}\|\tilde \Delta^m \psi_{\mu}\|_1^{-\frac{1}{2m}}=|\langle \mu+\rho, \mu+\rho \rangle- \langle \rho, \rho \rangle|^{-\frac{1}{2}}\sum_{m=1}^{\infty}\|\psi_{\mu}\|_1^{-\frac{1}{2m}}=\infty.\nonumber
\ee
\item Bochner and Taylor in \cite[Theorem 10]{BT} proved the following result on quasi-analytic functions on the unit sphere $S^{n-1}$: let $f\in C^\infty(S^{n-1})$ be such that
\be \label{cond-100}
\sum_{m\in \N}\|\Delta_{S^{n-1}}^mf\|_{\infty}^{-\frac{1}{m}}=\infty.
\ee
If $\Delta_{S^{n-1}}^m f(x)=0$ for all $m\in \N$ and $x\in U$, a set of analytic determination then $f$ vanishes identically.

If we assume that $f$ is radial then Theorem \ref{thm-cher-com} improves this result. Indeed, the condition (\ref{cond-100}) implies the hypothsis (\ref{Carl-cond-com}) for $p=\infty$ of Theorem \ref{thm-cher-com} . Hence, $f$ vanishes identically if $\Delta_{S^{n-1}}^m f(o)=0$, for all $m\in \N$ which is a much weaker assumption compared to vanishing on a set of anaytic determination.

\item Analogues of Theorem \ref{thm-cher-X} can also be formulated and proved (using the same technique) in the setting of Dunkl transform \cite{Ro} and of hypergeometric transforms associated with root systems \cite{NPP, OpdamActa}.

\end{enumerate}
\end{rem}

\end{document}